\DeclareMathOperator{\Hom}{Hom}
\DeclareMathOperator{\Ext}{Ext}
\newcommand{\Q}{\mathbb{Q}}
\newcommand{\NN}{\mathbb{N}}
\newcommand{\R}{\mathbb{R}}
\newcommand{\Z}{\mathbb{Z}}
\newcommand{\up}{\uparrow}
\newcommand{\rarr}{\rightarrow}
\DeclareMathOperator{\ch}{ch}
\DeclareMathOperator{\rad}{rad}
\DeclareMathOperator{\p}{\mathfrak P}
\DeclareMathOperator{\Stab}{Stab}
\DeclareMathOperator{\Ind}{Ind}
\newcommand{\half}{\frac{1}{2}}
\newcommand{\alphac}{\alpha^\vee}
\newcommand{\betac}{\beta\check{\ }\,}
\begin{document}

\newcounter{rownum}
\setcounter{rownum}{0}

\newtheorem{lemma}{Lemma}[section]
\newtheorem{theorem}[lemma]{Theorem}
\newtheorem*{TA}{Theorem A}
\newtheorem*{TB}{Theorem B}
\newtheorem*{TC}{Theorem C}
\newtheorem*{C3}{Corollary 3}
\newtheorem*{T4}{Theorem 4}
\newtheorem*{C5}{Corollary 5}
\newtheorem*{claim}{Claim}
\newtheorem{corollary}[lemma]{Corollary}
\newtheorem{conjecture}[lemma]{Conjecture}
\newtheorem{prop}[lemma]{Proposition}
\theoremstyle{remark}
\newtheorem{remark}[lemma]{Remark}
\newtheorem{obs}[lemma]{Observation}
\theoremstyle{definition}
\newtheorem{defn}[lemma]{Definition}

  \def\hal{\unskip\nobreak\hfil\penalty50\hskip10pt\hbox{}\nobreak
  \hfill\vrule height 5pt width 6pt depth 1pt\par\vskip 2mm}

\newenvironment{changemargin}[1]{%
  \begin{list}{}{%
    \setlength{\topsep}{0pt}%
    \setlength{\topmargin}{#1}%
    \setlength{\listparindent}{\parindent}%
    \setlength{\itemindent}{\parindent}%
    \setlength{\parsep}{\parskip}%
  }%
  \item[]}{\end{list}}

\parindent=0pt
\addtolength{\parskip}{0.5\baselineskip}

 \title[First cohomology groups]{First cohomology groups for finite groups of Lie type in defining characteristic}
\author{Alison E. Parker}
\address{School of Mathematics\\
University of Leeds\\
Leeds, LS2 9JT, UK\\
}
\email{a.e.parker@leeds.ac.uk {\text{\rm(Parker)}}}
\author{David I. Stewart}
\address{New College, Oxford\\ OX1 3BN, UK}
\email{david.stewart@new.ox.ac.uk {\text{\rm(Stewart)}}}

\pagestyle{plain}
\begin{abstract}Let $G$ be a finite group of Lie type, defined over a
  field $k$ of characteristic $p>0$. We find explicit bounds for the
  dimension of the first cohomology group for $G$ with coefficients in
  a simple $kG$-module. We proceed by bounding the number of
  composition factors of Weyl modules for simple algebraic groups
  independently of $p$ and using this to deduce bounds for the
  $1$-cohomology of simple algebraic groups. Finally, we use this to
  obtain estimations for the growth rate of the maximum dimension
  $\{\gamma_l\}$ of these $1$-cohomology groups over all groups of Lie
  type of rank $l$. We find that $\log\gamma_l$ is $O(l^3\log l)$ (or
  if the Lusztig conjecture holds, $O(l^2\log l)$).
\end{abstract}
\maketitle
\section{Introduction}
In \cite{Gur86} R. Guralnick made a conjecture that there should
be a universal bound on the dimension of the first cohomology groups
$H^1(G,V)$ where $G$ is a finite group and $V$ is an absolutely
irreducible faithful representation for $G$. The conjecture reduces to the case
where $G$ is a finite simple group, thence readily to the case where
$G$ is a finite group of Lie type or alternating group, since the
possibilities for $G$ and $V$ when $G$ is sporadic and $H^1(G,V)\neq
0$ is finite.

Very recently, computer calculations of Frank L\"ubeck have given evidence that the Guralnick conjecture may unfortunately be false. It even seems possible that the growth rate of the sequence with $l$
is in fact exponential. Finding a sequence of increasing dimensions of $H^1(G,V)$ has proven to be very challenging, but assuming such a sequence is eventually produced, the  natural question, as originally proposed in \cite{Sco03}
is to find upper and lower bounds for the growth rate of $\dim H^1(G,V)$ as
the Lie rank of $G$ grows.

For the question of finding upper bounds, there are two distinct cases, admitting very different techniques. In defining characteristic---that is where $V$ is a representation for a finite
group of Lie type $G(p^r)$ over a field of characteristic $p$---the
result \cite[Theorem 7.10]{CPS09} uses algebraic group methods to
assert the implicit existence of a bound on 1-cohomology.\footnote{In
  \cite{PS11} this was generalised in several directions, notably to
  show that the same bound works for the dimension of $\Ext^1$ between
  simples.}  In cross-characteristic, \cite{GT11} is more specific,
giving $|W|+e$ as a bound on $\dim H^1(G,V)$, where $e$ is the twisted
rank of $G$ and $W$ is the Weyl group $W(\Phi)$ of the root system
$\Phi$ of $G$. If the alternating groups are thought of as Lie type groups over the field of one element, then they are also covered by this case.

The Cline--Parshall--Scott result for defining characteristic 
suffers in comparison to the cross-characteristic case by not
furnishing any explicit bounds. The reason is that it relies on a
large piece of machinery due to Andersen--Jantzen--Soergel: if $G$ is
a semisimple algebraic group defined over a field of characteristic $p$, then
there is a prime $p_0$ such that for all $p\geq p_0$ a significant
amount of the representation theory of $G$ is independent of $p$,
including a character formula for all restricted simple modules. Using
this theorem one can reduce to the case of dealing with each $p$
separately: \cite{CPS09} shows that there are only cofinitely many values of $p$ for
which the maximum value of $\dim H^1(G,V)$ may differ. Unfortunately, in
the original result, $p_0$ was implicit. Nowadays one does know bounds
for $p_0$, courtesy of Peter Fiebig \cite{Fie}, but these are simply
too big: in combination with the \cite{CPS09} result, it leads only to
a bound for $\dim H^1(G,V)$ which grows super-exponentially with the rank.

The main purpose of this paper is to find a new proof in the defining
characteristic case. Our proof has certain advantages over the
previous one. First of all, we get, as in \cite{GT11}, explicit bounds; secondly, our proof is quite direct, and
uniform over all $p$. In particular we make no use of the Lusztig
character formula, nor of the representation theory of quantum groups.

Certain aspects of our proof are similar to that in \cite{CPS09}. We
start by finding a uniform bound for $H^1(G,V)$ with $G$ a simple
algebraic group of fixed root system and $V$ a simple $G$-module; for this our innovation is to
make use of the sum formula (Theorem \ref{sumformT})---one of the few
tools available in the the theory which is uniform with $p$. We
 use the sum formula to bound the length (i.e.~number of composition
 factors) of a Weyl module $V=V(\lambda)$ with restricted high weight
 $\lambda\in X_1$. If $L(\lambda)$ is the corresponding simple head of
 $V$, it is a well known fact that $H^1(G,L(\lambda))\cong \Hom_G(\rad
 V,k)$, showing that a bound on the number of composition factors of
 $V$ also bounds the dimension of $H^1(G,L(\lambda))$. After that, a Frobenius kernel
 argument (Proposition \ref{theprop}) gives us a bound for all simples (not just the restricted
 ones). From here we follow the same route as \cite{CPS09} by using
 results of Bendel--Nakano--Pillen to relate algebraic group
 cohomology to finite group cohomology.
 
Our main result is
\begin{TA}Let $G$ be a finite simple group of Lie type with associated
  Coxeter number $h$ and defined over an algebraically closed field $k$. Let $V$ be an irreducible $kG$-module.  Then \[\dim H^1(G,V)\leq
  \max\left\{\frac{z_p^{h^3/6+1}-1}{z_p-1},\frac{1}{2}\left(h^2(3h-3)^3\right)^{\frac{h^2}{2}}\right\} \]
  where $z_p=\lfloor h^3/6 (1+\log_p(h-1))\rfloor \leq\lfloor h^3/6
  (1+\log_2(h-1))\rfloor$; if $p\geq h$ then we may take
  $z_p=h^3/6$.\end{TA}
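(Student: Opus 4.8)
The plan is to reduce everything to the ambient algebraic group. Let $\mathbf{G}$ be the simply connected simple algebraic group over $k$ with $G$ a quotient of $\mathbf{G}^{F}$ by a central subgroup, for a Steinberg endomorphism $F$; by inflation it is enough to bound $\dim H^{1}(\mathbf{G}^{F},V)$ for every simple $k\mathbf{G}^{F}$-module $V$, and for that we first bound $\dim H^{1}(\mathbf{G},L(\lambda))$ uniformly in $p$ over all dominant $\lambda$. The core is a $p$-independent bound on the length $\ell(\lambda)$ (number of composition factors, with multiplicity) of the Weyl module $V(\lambda)$ for $\lambda\in X_{1}$ restricted. For this I would feed the Jantzen sum formula (Theorem~\ref{sumformT}) into an induction: its right-hand side $\sum_{i>0}\ch V^{i}(\lambda)=\sum_{\alpha>0}\sum_{0<mp<\langle\lambda+\rho,\alpha^{\vee}\rangle}\nu_{p}(mp)\,\chi(s_{\alpha,mp}\cdot\lambda)$, after reducing each $\chi$ to its dominant representative, is a non-negative combination $\sum_{\mu<\lambda}c_{\mu}\chi(\mu)$ of Weyl characters. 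Since $V(\lambda)$ has simple head $L(\lambda)$ of multiplicity one, $V^{1}(\lambda)=\rad V(\lambda)$, and $\sum_{i\ge1}\ch V^{i}=\ch\rad V(\lambda)+\sum_{i\ge2}\ch V^{i}$ with both summands genuine module characters, comparing multiplicities of each simple character yields $[\rad V(\lambda):L(\nu)]\le\sum_{\mu}c_{\mu}[V(\mu):L(\nu)]$, hence the recursion $\ell(\lambda)\le 1+\sum_{(\alpha,m)}\nu_{p}(mp)\,\ell(s_{\alpha,mp}\cdot\lambda)$.

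Now I need two $p$-uniform estimates for restricted $\lambda$. First, because $\langle\lambda+\rho,\alpha^{\vee}\rangle\le p\,\langle\rho,\alpha^{\vee}\rangle$ for $\lambda\in X_{1}$, the number of admissible pairs $(\alpha,m)$ is at most $\sum_{\alpha>0}\langle\rho,\alpha^{\vee}\rangle=\langle\rho,2\rho^{\vee}\rangle$, which is bounded by $h^{3}/6$, while each coefficient $\nu_{p}(mp)\le 1+\log_{p}(h-1)$; together these bound $\sum_{(\alpha,m)}\nu_{p}(mp)$ by $z_{p}=\lfloor h^{3}/6\,(1+\log_{p}(h-1))\rfloor$, collapsing to $h^{3}/6$ when $p\ge h$. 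Second --- and this is the crux --- one must bound the depth of the recursion independently of $p$; I would do this via a monovariant (essentially the number of admissible pairs, or a close relative) that is at most $\langle\rho,2\rho^{\vee}\rangle\le h^{3}/6$ on the restricted weights and that strictly decreases under each reduction $\lambda\mapsto s_{\alpha,mp}\cdot\lambda$, so that after at most $h^{3}/6$ steps one reaches a simple Weyl module. Unrolling $\ell(\lambda)\le 1+z_{p}\max_{\mu}\ell(\mu)$ to depth $h^{3}/6$ then gives $\ell(\lambda)\le\frac{z_{p}^{h^{3}/6+1}-1}{z_{p}-1}$, the first term of the maximum. Since the trivial module has no self-extensions, $H^{1}(\mathbf{G},L(\lambda))\cong\Hom_{\mathbf{G}}(\rad V(\lambda),k)$, whence $\dim H^{1}(\mathbf{G},L(\lambda))\le\ell(\lambda)$ for restricted $\lambda$.

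For an arbitrary dominant weight I would invoke Proposition~\ref{theprop}: writing $\lambda=\lambda_{0}+p\mu$ with $\lambda_{0}\in X_{1}$ and running the inflation--restriction sequence for the first Frobenius kernel $\mathbf{G}_{1}$, either $\lambda_{0}=0$ and $H^{1}(\mathbf{G},L(\lambda))\cong H^{1}(\mathbf{G},L(\mu))$, closing an induction on the number of $p$-adic digits of $\lambda$, or $H^{1}(\mathbf{G},L(\lambda))$ is controlled by $\mathbf{G}_{1}$-cohomology of $L(\lambda_{0})$, whose dimension is bounded in terms of $\dim L(\lambda_{0})\le\dim V(\lambda_{0})$; estimating the latter crudely by the Weyl dimension formula over the $\le h^{2}/2$ positive roots gives an expression of the shape $\tfrac12\big(h^{2}(3h-3)^{3}\big)^{h^{2}/2}$, the second term. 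Finally I would descend from $\mathbf{G}$ to $\mathbf{G}^{F}$ exactly as in \cite{CPS09}: the Bendel--Nakano--Pillen comparison theorems identify $H^{1}(\mathbf{G}^{F},L(\lambda))$, for $q=p^{r}$ at least roughly $3(h-1)$ and $\lambda$ $q$-restricted, with $H^{1}(\mathbf{G},L(\mu))$ for a bounded family of weights $\mu$ built from $\lambda$ --- already controlled above --- while the finitely many remaining pairs $(p,r)$ with $q<3(h-1)$ are absorbed into the second term via the same dimension estimate, and inflation from $\mathbf{G}^{F}$ to $G$ can only shrink $H^{1}$. The main obstacle is the length bound: extracting from the sum formula an estimate whose exponents are polynomial in $h$ alone and whose $p$-dependence is the mild logarithmic factor in $z_{p}$ --- in particular, establishing the $p$-uniform depth bound for the recursion. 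Everything downstream is bookkeeping with the inflation--restriction spectral sequence and the Bendel--Nakano--Pillen dictionary.
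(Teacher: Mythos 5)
Your overall architecture matches the paper's: Jantzen sum formula fed into an induction to bound the length of restricted Weyl modules, the standard identification of $H^1$ with a Hom-space off the radical of a Weyl module, a Frobenius-kernel ($\mathbf{G}_1$) argument for non-restricted weights, and a Bendel--Nakano--Pillen descent to the finite group. The ``monovariant'' you leave as the crux is exactly the paper's $d(\lambda)$ of Lemma \ref{estimates}: its value on $X_1$ is bounded by Lemma \ref{restrictedDLambda}, and its strict decrease is Lemma \ref{estimates}(iii), proved by the strong linkage principle (every $\chi(\mu)$ surviving the rewriting has $\mu\uparrow\lambda$, hence $d(\mu)<d(\lambda)$); you should supply this, but it is a short step. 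The genuine gap is your treatment of non-restricted dominant weights for the algebraic group. You propose to bound the $\lambda_0\neq 0$ case by ``$\mathbf{G}_1$-cohomology of $L(\lambda_0)$, bounded in terms of $\dim L(\lambda_0)\le\dim V(\lambda_0)$'' and claim this produces the second term $\tfrac12\bigl(h^2(3h-3)^3\bigr)^{h^2/2}$. This fails twice over: no result bounds $\dim H^1(\mathbf{G}_1,L(\lambda_0))$ (or the relevant Hom-space) by $\dim L(\lambda_0)$, and even granting such a bound it is $p$-dependent --- $\dim V(\lambda_0)$ can be as large as $p^{|\Phi^+|}$ with $p$ unbounded in the algebraic-group setting --- so it cannot give a bound in terms of $h$ alone, and in particular cannot give the quantity $h^2(3h-3)^3$, which is a bound on $q=p^r$ available only in the finite-group descent. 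The paper's route is different and $p$-uniform: Proposition \ref{theprop} bounds $\dim H^1(\mathbf{G},L(\lambda))$ by $\dim\Hom_{\mathbf{G}}((L(\lambda')^*)^{[1]},H^0(\lambda_0)/L(\lambda_0))+1$, using the BNP computation that $H^1(\mathbf{G}_1,H^0(\lambda_0))^{[-1]}$ is a multiplicity-free sum of modules $H^0(\omega_i)$ to get the ``$+1$''; Corollary \ref{corbound} then bounds this by $\ell(V(\lambda_0))$, i.e.\ by the \emph{first} term of the maximum, for all dominant $\lambda$.

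The finite-group step also needs repair. The comparison with algebraic-group cohomology is not available simply when $q\gtrsim 3(h-1)$: the hypotheses are $p\ge 3h-3$, or $r\ge 2$ together with $p^{s-1}(p-1)>h$ where $s=\lfloor r/2\rfloor$, and in the residual cases one only gets $q\le h^2(3h-3)^3$. It is precisely there that the second term arises, via the Guralnick--Hoffman bound $\dim H^1(G_\sigma,L)\le\tfrac12\dim L$ combined with Lemma \ref{steinbergbig} ($\dim L\le q^{|\Phi^+|}$) and $|\Phi^+|\le h^2/2$; your ``finitely many pairs with $q<3(h-1)$'' misstates the bad region and would not account for the stated constant. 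Finally, the Suzuki and Ree groups (and type $A_1$) are not covered by the $\sigma=\tau\circ F^r$ setup underlying the comparison theorems and must be handled separately, as the paper does via \cite{Sin92}, \cite{Sin93}, \cite{SinF4} and \cite{AJL83}.
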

 
Applying this result together with that of Guralnick--Tiep, we may state a growth rate result. Let $\{\gamma_l\}$ be
the sequence $\{\max H^1(G,V)\}$, where the maximum is over all finite simple
groups of Lie type of Lie rank $l$ and irreducible representations $V$ for $G$. 

\begin{TB}We have $\log\gamma_l=O(l^3\log l)$.\end{TB}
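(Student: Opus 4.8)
The plan is to extract the asymptotics in $l$ from the explicit bound in Theorem A. The first step is to recall that for a finite simple group of Lie type of Lie rank $l$, the Coxeter number $h$ of the ambient root system is bounded linearly in $l$: indeed $h \leq 2l$ (in type $A_l$ one has $h = l+1$, and in the other types $h$ is at most of the order of $2l$), so it suffices to track the dependence of the Theorem A bound on $h$ and then substitute $h = O(l)$. Thus we take logarithms of the right-hand side of the Theorem A bound and estimate each of the two terms inside the maximum separately; since $\log\max\{A,B\}=\max\{\log A,\log B\}$, it is enough to show each term has logarithm $O(l^3\log l)$.

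For the second term $\tfrac12\left(h^2(3h-3)^3\right)^{h^2/2}$, taking $\log$ gives $\tfrac{h^2}{2}\log\!\left(h^2(3h-3)^3\right) + O(1) = \tfrac{h^2}{2}\bigl(2\log h + 3\log(3h-3)\bigr)+O(1)$, which is $O(h^2\log h)$, hence $O(l^2\log l)$ — comfortably within the claimed bound. For the first term, the geometric-series expression $\frac{z_p^{h^3/6+1}-1}{z_p-1}$ is at most $z_p^{h^3/6+1}$ (for $z_p\geq 2$), so its logarithm is at most $(h^3/6+1)\log z_p$. Now $z_p = \lfloor (h^3/6)(1+\log_p(h-1))\rfloor \leq \lfloor (h^3/6)(1+\log_2(h-1))\rfloor$, so $\log z_p = O(\log(h^3\log h)) = O(\log h)$. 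Therefore the logarithm of the first term is $O(h^3\log h) = O(l^3\log l)$, using again $h = O(l)$.

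Combining the two estimates, $\log\gamma_l \leq \max\{O(l^3\log l),\,O(l^2\log l)\} = O(l^3\log l)$, as the first term dominates. One subtlety to address carefully is the reduction from arbitrary finite simple groups of Lie type to the algebraic-group setting underlying Theorem A: strictly, a group of rank $l$ over a field $\F_{p^r}$ with $p^r$ large is already covered by Theorem A (which is stated for $kG$ with $k$ algebraically closed and $V$ irreducible over $k$), but for the finitely many small groups not directly covered one invokes the Guralnick–Tiep bound $|W|+e$, whose logarithm is $O(l\log l)$ since $|W|\leq 2^l l!$. As this is dominated by $O(l^3\log l)$, it does not affect the asymptotic, and the growth-rate statement follows. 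The main (and only) obstacle here is purely bookkeeping: making sure the floor functions and the $\log_p$ versus $\log_2$ distinction are handled so that the estimate is genuinely uniform over all primes $p$, which is exactly why Theorem A was phrased with the worst-case $z_p$ already built in.
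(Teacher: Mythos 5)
Your estimate of the defining-characteristic contribution is correct and is essentially the paper's own argument: take logarithms of the Theorem A bound, note that $\log z_p=O(\log h)$ so the geometric-series term contributes $O(h^3\log h)$ while the second term contributes only $O(h^2\log h)$, and then use that $h$ is linear in the rank. (Your claim ``$h\leq 2l$'' is false for $E_8$, where $h=30$ and $l=8$, but the exceptional types are finitely many, so $h=O(l)$ still holds and the asymptotics are unaffected.)

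The genuine problem is your case division. Theorem A is a \emph{defining-characteristic} statement: $G$ is defined over $k$ of characteristic $p$ and $V$ is an irreducible $kG$-module over that same $k$, so it says nothing about irreducible representations in cross characteristic, whereas $\gamma_l$ is the maximum over \emph{all} irreducible representations of all finite simple groups of Lie type of rank $l$. You invoke the Guralnick--Tiep bound only for ``finitely many small groups not directly covered'', asserting that a group over $\F_{p^r}$ with $p^r$ large is already covered by Theorem A; but the issue is not the size of the field or of the group, it is the characteristic of the coefficient field of $V$. As written, cross-characteristic modules for groups of arbitrarily large rank fall into neither of your two cases. The repair is immediate and is exactly what the paper does: the Guralnick--Tiep bound $|W|+e$ applies to every cross-characteristic irreducible module regardless of rank, and since $|W|\leq 2^l l!$ its logarithm is $O(l\log l)$, dominated by the $O(l^3\log l)$ coming from Theorem A. With that correction your argument coincides with the paper's proof.
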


In the presence of the Lusztig conjecture we can do better than
this. At the end of the paper we can use our approach from bounding the length of Weyl modules  to improve the
Cline--Parshall--Scott bounds for specific $p$, removing a
dependence on the Kostant partition function. We get
\begin{TC} Whenever $p\geq h$, suppose the Lusztig Character Formula
  holds for all $p$-restricted weights. Let $G$ be a finite group of Lie type defined over an algebraically closed field $k$. Then if $V$ is an irreducible $kG$-module, we have \[\dim H^1(G,V)\leq
  \max\left\{(2h)^{h^2/2},\frac{1}{2}\left(h^2(3h-3)^3\right)^{\frac{h^2}{2}}\right\}.\]
  In particular, $\log\gamma_l=O(l^2\log l)$.\end{TC}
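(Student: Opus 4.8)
The plan is to follow the proof of Theorem~A line for line, changing only the bound fed in for the length (the number of composition factors, counted with multiplicity) of a Weyl module $V(\lambda)$ with $p$-restricted highest weight $\lambda\in X_1$. Recall that $H^1$ of the ambient simple algebraic group with coefficients in $L(\lambda)$ is $\Hom(\rad V(\lambda),k)$, so its dimension is at most one less than the length of $V(\lambda)$; Proposition~\ref{theprop}, a Frobenius-kernel argument, then upgrades a bound for restricted $\lambda$ to a bound for all simple modules of the algebraic group; and the Bendel--Nakano--Pillen comparison carries this over to $\dim H^1(G,V)$ for the finite group $G$. The term $\tfrac12(h^2(3h-3)^3)^{h^2/2}$ in the statement is precisely the quantity manufactured by this last reduction, and does not change between Theorems~A and~C. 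So everything reduces to showing that the length of $V(\lambda)$ is at most $(2h)^{h^2/2}$ for every $\lambda\in X_1$, under the stated hypotheses.

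Split on the size of $p$. If $p<h$, then every $p$-restricted weight satisfies $\lambda\le(p-1)\rho$, so the Weyl dimension formula gives $\text{length }V(\lambda)\le\dim V(\lambda)\le\dim V((p-1)\rho)=\prod_{\alpha\in\Phi^+}p=p^{|\Phi^+|}\le h^{|\Phi^+|}$, and since $|\Phi^+|\le h^2/2$ in every Cartan type (a one-line check in each), this is at most $(2h)^{h^2/2}$; the Lusztig conjecture is not needed here.

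Now suppose $p\ge h$ and assume the Lusztig Character Formula for all $p$-restricted weights. Then, via Steinberg's tensor product theorem and the LCF, every decomposition number $[V(\lambda):L(\mu)]$ is governed by the affine Kazhdan--Lusztig combinatorics of $W_p$ under the dot action, and the length of $V(\lambda)$ is a finite sum of (parabolic) Kazhdan--Lusztig values at $1$. I would bound this as a product of two factors. First, the number of $\mu$ occurring: each composition factor $L(\mu)$ has $\mu$ dominant with $\mu\uparrow\lambda$, and for $\lambda\in X_1$ one has $\langle\lambda+\rho,\alpha^\vee\rangle\le p\,\mathrm{ht}(\alpha)\le p(h-1)$ for all $\alpha\in\Phi^+$; hence the weights $\mu+\rho$ lie in the $(h-1)$-fold dilate of the fundamental alcove, which meets at most $(h-1)^l$ alcoves, so at most $(h-1)^l$ weights $\mu$ arise. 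Second, a uniform bound on each multiplicity: the affine Weyl group element attached to $\lambda$ has length at most $\sum_{\alpha\in\Phi^+}\mathrm{ht}(\alpha)$, which bounds the degree (and hence, with a priori control of the coefficients, the value at $1$) of all the Kazhdan--Lusztig polynomials in play by a function of $h$ alone. Combining the two factors and using $|\Phi^+|\le h^2/2$ and $l\le h^2/2$ gives the required bound $(2h)^{h^2/2}$; alternatively one can avoid explicit Kazhdan--Lusztig estimates by rewriting the Jantzen sum formula in the presence of the LCF so that only the $\le(h-1)^l$ Weyl characters $\chi(\mu)$ with $\mu\uparrow\lambda$ appear and each composition factor sits at bounded Jantzen level. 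Feeding this length bound into Proposition~\ref{theprop} and the Bendel--Nakano--Pillen step, exactly as for Theorem~A, yields the displayed inequality for $\dim H^1(G,V)$.

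Finally, since $h=O(l)$ for every family, taking logarithms gives $\log\big((2h)^{h^2/2}\big)=\tfrac{h^2}{2}\log(2h)=O(l^2\log l)$ and likewise $\log\big(\tfrac12(h^2(3h-3)^3)^{h^2/2}\big)=O(l^2\log l)$; together with the Guralnick--Tiep bound $|W|+e$ for the cross-characteristic groups of rank $l$, whose logarithm is $O(l\log l)$, this gives $\log\gamma_l=O(l^2\log l)$. The one genuinely delicate point is the second factor in the $p\ge h$ case: extracting from the Lusztig formula a bound on the multiplicities $[V(\lambda):L(\mu)]$ (equivalently, on the length of the Jantzen filtration) that is simultaneously explicit, independent of $p$, and small enough to survive summation over the $(h-1)^l$ possible $\mu$; by contrast the alcove count and the reduction to the finite group are routine, and this is the only place the conjecture is used.
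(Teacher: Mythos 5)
Your overall skeleton is the paper's: reduce to bounding the length of restricted Weyl modules, feed that into Proposition~\ref{theprop} and Corollary~\ref{corbound}, pass to the finite group via Proposition~\ref{theoremFinite} and the Theorem~A argument, and take logarithms at the end. The $p<h$ observation ($\ell(V(\lambda))\leq\dim V(\lambda)\leq p^{|\Phi^+|}$) is also exactly the first part of Proposition~\ref{lcfcor}. But the heart of the matter --- showing $\ell(V(\lambda))\leq (2h)^{h^2/2}$ for $\lambda\in X_1$ when $p\geq h$ and the LCF holds --- is not actually proved in your proposal, and you say so yourself. Your plan is to multiply an alcove count ($\leq(h-1)^l$ weights $\mu$ with $\mu\uparrow\lambda$) by a uniform bound on each multiplicity $[V(\lambda):L(\mu)]$ extracted from values of affine Kazhdan--Lusztig polynomials at $1$, ``with a priori control of the coefficients.'' No such control is supplied: bounding the degree of the relevant polynomials by a function of $h$ does not bound their coefficients, and there is no off-the-shelf explicit, $p$-independent estimate for their values at $1$ of the required size. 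The alternative you sketch (rewriting the Jantzen sum formula ``in the presence of the LCF'') is likewise not developed. So the one step where the Lusztig hypothesis must do work is exactly the step left open.

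The paper closes this gap by a quite different and much softer argument (Proposition~\ref{lcfcor}), which you may find worth comparing with. Under the LCF the decomposition numbers of Weyl modules with restricted highest weight are given by fixed affine Weyl group combinatorics (first on the principal block, then for all blocks via translation functors), hence are \emph{independent of} $p$ for all $p\geq p_0$ where the formula holds. Therefore $\ell(V(\lambda))$ may be computed at the single prime $p=p_0$, the least prime $\geq h$, where it is trivially bounded by $\dim V(\lambda)\leq p_0^{|\Phi^+|}$ (Lemma~\ref{steinbergbig}); Bertrand's postulate gives $p_0\leq 2h$, whence $\ell(V(\lambda))\leq(2h)^{|\Phi^+|}\leq(2h)^{h^2/2}$ for every $p\geq p_0$. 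No Kazhdan--Lusztig polynomial estimates, and no counting of linked weights, are needed. If you want to salvage your multiplicative approach you would have to prove an explicit bound on the individual multiplicities, which is a genuinely harder problem than the one the paper solves; otherwise, replace that step by the $p$-independence-plus-dimension-at-$p_0$ argument, after which the rest of your write-up (including the final $O(l^2\log l)$ computation and the appeal to Guralnick--Tiep in cross-characteristic) goes through as in the paper.
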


\section{Notation}
Most of our notation will be that used in Jantzen \cite{Jan03} and the
reader is referred there for the proper definitions.

Throughout this paper $k$ will be an algebraically closed field of
characteristic $p$ and $G$ will be a group, finite or algebraic. 

Suppose $G$ be a connected, simple, simply connected algebraic group. 
We fix  a maximal torus $T$ of $G$ of dimension $n$, 
the rank of $G$.
We also fix $B$, a Borel subgroup of $G$ with $B \supseteq T$ 
and let $W$ be the Weyl group of $G$.

Let $X(T)=X$ be the weight lattice for $G$ and $Y(T)=Y$ the dual weights.
The natural pairing $\langle -,- \rangle :X  \times Y
\rarr \Z$ is bilinear and induces an isomorphism $Y\cong \Hom_\Z( X,\Z)$.
We take $\Phi$ to be the roots of $G$. 
For each $\alpha \in \Phi$ we take 
$\alphac \in Y$ to be  the coroot of $\alpha$. 
Let  $\Phi^+$ be the positive roots, chosen so that $B$ is the negative
Borel and let $\Pi$ be the set of simple roots. 
Set $\rho = \half \sum_{\alpha \in \Phi^+} \alpha \in X(T)\otimes_{\Z}\Q$.

We have a partial order on $X(T)$ defined by 
$\mu \le \lambda \iff \lambda -\mu \in \NN S$.
A weight $\lambda$ is \emph{dominant} if
$\langle \lambda, \alphac \rangle \ge 0$ for all $\alpha \in \Pi$ and
we let $X^+(T)=X^+$ be the set of dominant weights.

Take $\lambda \in X^+$ and let $k_\lambda$ be the one-dimensional module
for $B$ which has weight $\lambda$. We define the induced
module, $H^0(\lambda)= \Ind_B^G(k_\lambda)$. This module has
simple socle $L(\lambda)$, 
the irreducible $G$-module of highest weight
$\lambda$. One also has the Weyl module $V(\lambda)=H^0(-w_0(\lambda))^*$ with $L(\lambda)$ as its simple head. Both modules have formal character given by Weyl's character formula,
$\chi(\lambda)$.  Any finite dimensional, rational irreducible $G$-module
is isomorphic to $L(\lambda)$ for a unique $\lambda \in X^+$.


We return to considering the weight lattice $X(T)$ for $G$.
There are also the affine reflections
$s_{\alpha,mp}$ for $\alpha$ a positive
root and $m\in \Z$ which act on $X(T)$ as
$s_{\alpha,mp}(\lambda)=\lambda -(\langle\lambda,\alphac\rangle -mp )\alpha$.
These generate the affine Weyl group $W_p$. 
We mostly use the dot action of $W_p$ on $X(T)$ which is 
the usual action of $W_p$, with the origin
shifted to $-\rho$. So we have $w \cdot \lambda = w(\lambda+\rho)-\rho$.
If $C$ is an alcove for $W_p$ then its closure $\bar{C}\cap X(T)$ is a
fundamental domain for $W_p$ operating on $X(T)$. The group $W_p$
permutes the alcoves simply transitively.
We set 
$C_\mathbf Z= \{ \lambda \in X(T) \otimes _{\Z} \R \ \mid\  0< \langle \lambda +\rho,
\alphac \rangle < p \quad \forall\, \alpha \in \Phi^+ \}$ 
 and call $C_\mathbf Z$ the \emph{fundamental alcove}.
We also set
$h = \max \{ \langle \rho, \betac \rangle +1 \ \mid\ \beta \in
\Phi^+\}$, the Coxeter number of $\Phi$. We have
$C_\mathbf Z \cap X(T) \ne \emptyset \ \iff \ \langle \rho, \betac \rangle < p,
\  \forall\, \beta \in \Phi^+ \ \iff \ p \ge h$.

We say that $\lambda$ and $\mu$ are \emph{linked} if  they belong to the 
same $W_p$ orbit on $X(T)$ (under the dot action).
If two irreducible modules $L(\lambda)$ and $L(\mu)$ are in the same $G$
block then $\lambda$ and $\mu$ are linked. 
Linkage gives us 
another partial order on both $X(T)$, and the set of alcoves of $X(T)$, denoted $\up$.
If $\alpha$ is a positive root  and $m\in \Z$ then we set
$$s_{\alpha,mp} \cdot \lambda \up \lambda \quad\mbox{if and only
if}\quad \langle\lambda+\rho ,
\alphac\rangle \ge mp.$$
This then generates an order relation on $X(T)$.
So $\mu \up \lambda$ if there are reflections $s_i \in W_p$ with 
$$\mu=s_m s_{m-1} \cdots s_1 \cdot \lambda \up 
s_{m-1} \cdots s_1 \cdot \lambda \up 
\cdots \up
s_1 \cdot \lambda \up 
\lambda.$$ The notion $C_1\up C_2$ is defined similarly.

For $\lambda\in X(T)$, define $n_\alpha,d_\alpha\in \mathbb Z$ by $\langle\lambda+\rho,\alpha^\vee\rangle=n_\alpha p+d_\alpha$ and $0<d_\alpha\leq p$. Following \cite[II.6.6, Remark]{Jan03}, we define  \begin{equation}\label{sumdlambda}d(\lambda) = \sum_{\alpha\in
    \Phi^+} n_\alpha=\sum_{\alpha\in
  \Phi^+}\left\lfloor\frac{\langle\lambda+\rho,\alpha^\vee\rangle}{p}\right\rfloor,\end{equation} where $\lfloor r\rfloor$ denotes the greatest integer function for
$r\in \mathbb R$.  Recall from \cite[II.6.6, Remark]{Jan03}, that each
  $\lambda\in X(T)$ lies in the upper closure $\widehat C$ of a unique
  alcove $C$. If $\lambda\in X^+$ and $d(C)$ is defined as the longest chain
  $C_\mathbf Z\uparrow C_1\uparrow\dots C_n=C$, then we have
  $d(\lambda)=d(C)$.

When $G$ is algebraic or finite, the category of (rational) $kG$-modules has enough injectives and so we may
define $\Ext_G^*(-,-)$ as usual by using injective
resolutions. We define $H^i(G,V) := \Ext_G^i(k,V)$.

\section{Preliminaries}

The following famous result is \cite[II.8.19]{Jan03} and is due to Andersen in its full generality.

\begin{theorem}[The sum formula]\label{sumformT} For each $\lambda\in
  X^+$ there is a
  filtration of $G$-modules
\[V(\lambda)=V(\lambda)^0\supset V(\lambda)^1\supset V(\lambda)^2\supset\dots\]
such that
\begin{equation}\label{sum}\sum_{i>0}\ch V(\lambda)^i=\sum_{\alpha\in
    \Phi^+}\sum_{0<mp<\langle\lambda+\rho,\alpha^\vee\rangle}v_p(mp)
\chi(s_{\alpha,mp}\cdot\lambda)\end{equation}
and \begin{equation}V(\lambda)/V(\lambda)^1\cong
  L(\lambda)\end{equation}
where $v_p$ is the usual $p$ valuation.\end{theorem}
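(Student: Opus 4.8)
The plan is to realise $V(\lambda)$ as the reduction modulo $p$ of an integral lattice carrying a contravariant bilinear form, and to read off both the filtration and the character identity from the elementary divisors of that form. Fix a discrete valuation ring $\mathcal O$ with maximal ideal $(\pi)$, residue field $k$, and fraction field $K$ of characteristic $0$ (one may take $\mathcal O=\Z_{(p)}$, with valuation $v_p$). Over $\mathcal O$ one has the Weyl module $V_{\mathcal O}(\lambda)$, a $G_{\mathcal O}$-stable lattice with $V_{\mathcal O}(\lambda)\otimes_{\mathcal O}k\cong V(\lambda)$ and $V_{\mathcal O}(\lambda)\otimes_{\mathcal O}K$ the finite-dimensional irreducible of highest weight $\lambda$ in characteristic $0$. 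It carries a contravariant bilinear form $\langle-,-\rangle\colon V_{\mathcal O}(\lambda)\times V_{\mathcal O}(\lambda)\to\mathcal O$, unique up to a unit and normalised to take the value $1$ on a fixed highest weight vector; the form pairs distinct weight spaces to $0$, becomes non-degenerate after $\otimes_{\mathcal O}K$, and after $\otimes_{\mathcal O}k$ its radical is the unique maximal submodule of $V(\lambda)$.

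Next, for $i\geq 0$ set $V_{\mathcal O}(\lambda)^i=\{v\in V_{\mathcal O}(\lambda):\langle v,V_{\mathcal O}(\lambda)\rangle\subseteq\pi^i\mathcal O\}$ and let $V(\lambda)^i$ be its image in $V(\lambda)$. Contravariance makes each $V(\lambda)^i$ a $G$-submodule; clearly $V(\lambda)^0=V(\lambda)$, and $V(\lambda)^1$ is the radical of the reduced form, so $V(\lambda)/V(\lambda)^1\cong L(\lambda)$, which is the second displayed identity. The next step is a purely lattice-theoretic lemma: for a weight $\mu$, let $G_\mu$ be the Gram matrix of $\langle-,-\rangle$ on the $\mu$-weight space of $V_{\mathcal O}(\lambda)$ with respect to an $\mathcal O$-basis; then
\[\sum_{i>0}\dim_k\big(V(\lambda)^i\big)_\mu=v_p(\det G_\mu),\]
so summing over $\mu$ gives $\sum_{i>0}\ch V(\lambda)^i=\sum_\mu v_p(\det G_\mu)\,e^\mu$.

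It remains to evaluate the Gram determinants. Here the idea is to reduce to rank one: for each $\alpha\in\Phi^+$ and its attached $\mathrm{SL}_2$, the contravariant form along $\alpha$-strings can be written down explicitly, and assembling these contributions (a Shapovalov-type determinant computation, adapted from Verma modules to Weyl modules) produces, up to units, a factorisation of $\det G_\mu$ indexed by the affine hyperplanes $\langle\,\cdot+\rho,\alpha^\vee\rangle=m$. Taking $v_p$ isolates the hyperplanes with $p\mid m$; reorganising the resulting contributions over all $\mu$, and using that Weyl's character formula makes sense on arbitrary weights via $\chi(w\cdot\nu)=(-1)^{\ell(w)}\chi(\nu)$ together with $\chi(\nu)=0$ when $\nu+\rho$ is singular, one recognises the right-hand side of \eqref{sum}, namely $\sum_{\alpha\in\Phi^+}\sum_{0<mp<\langle\lambda+\rho,\alpha^\vee\rangle}v_p(mp)\,\chi(s_{\alpha,mp}\cdot\lambda)$.

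The main obstacle is precisely this last step. One must pin the contravariant form down integrally, and precisely enough, to obtain the Shapovalov factors with the correct exponents, and then carry out the double sum over $m$ and $\alpha$ so that the virtual characters $\chi(s_{\alpha,mp}\cdot\lambda)$ — several of which may vanish or occur with signs coming from weights on walls — combine into exactly the stated expression. By contrast, the lattice lemma and the identification of $V(\lambda)^1$ with the radical of the form are essentially formal; all the genuine content sits in the determinant evaluation and its combinatorial repackaging.
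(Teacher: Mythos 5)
Note first that the paper does not prove this theorem at all: it is quoted from \cite[II.8.19]{Jan03} and, as remarked there, is due to Andersen in its full generality. So the only thing to assess is whether your sketch would amount to an independent proof, and as it stands it does not. The formal part of your plan is correct and standard: the admissible lattice $V_{\mathcal O}(\lambda)$ with its contravariant form, the filtration by divisibility of the form, the identification of $V(\lambda)^1$ with the radical of the reduced form (hence $V(\lambda)/V(\lambda)^1\cong L(\lambda)$), and the elementary-divisor lemma giving $\sum_{i>0}\ch V(\lambda)^i=\sum_\mu v_p(\det G_\mu)e^\mu$ are all routine.

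The genuine gap is exactly the step you flag as ``the main obstacle'' and then do not carry out: the integral evaluation of the Gram determinants $\det G_\mu$ and their repackaging into $\sum_{\alpha\in\Phi^+}\sum_{0<mp<\langle\lambda+\rho,\alpha^\vee\rangle}v_p(mp)\chi(s_{\alpha,mp}\cdot\lambda)$. This is not a technicality one can wave at via ``a Shapovalov-type determinant computation, adapted from Verma modules to Weyl modules'': the weight spaces of $V_{\mathcal O}(\lambda)$ are quotients of Verma weight spaces by integrally nontrivial kernels, rank-one $\alpha$-string arguments do not by themselves yield the exponents with which each affine hyperplane contributes, and pinning these down uniformly in $p$ (including small $p$) is precisely the content of the theorem. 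Indeed, Jantzen's original determinant approach required restrictions on $p$, and Andersen's proof of the general statement quoted here does not proceed by a global Gram-determinant factorisation at all, but by comparing $V(\lambda)$ and $H^0(\lambda)$ over a discrete valuation ring via cohomology of line bundles on $G/B$, handling one positive root (one wall) at a time. So your proposal reproduces the well-known framework of the Jantzen filtration but leaves the actual identity \eqref{sum} unproved, and the route you indicate for it is not known to work in the stated generality.
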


Here the definition of formal character $\chi(\lambda)$ for dominant
$\lambda \in X^+$ is extended to all weights 
$\nu\in X(T)$ via the formula  
  $\chi(w\cdot\nu)=\det(w)\chi(\nu)$ for $w\in W$, 
  \cite[II.5.9(1)]{Jan03}. 
If $w\cdot\nu$ is not dominant for any
  $w\in W$, then $\chi(\nu)=0$. Otherwise we can express $\chi(\nu)$
  as $\pm\chi(\mu)$ for $\mu\in X^+$. 

\begin{lemma}\label{estimates} Suppose $\lambda\in X^+$, with
  $\langle\lambda+\rho,\alpha_0^\vee\rangle\leq b$.

(i) The number of terms arising from expansion of the two summations
  of the right-hand side of (\ref{sum}) is $d(\lambda)$.

(ii) The maximum value of $v_p(mp)$ occurring in the summands of
  (\ref{sum}) is \[\left\lfloor\log_p (b-1)\right\rfloor.\]
  
  (iii) After rewriting  each character
  $\chi(s_{\alpha,mp}\cdot\lambda)$ of the RHS of 
  (\ref{sum}) in terms of dominant weights $\chi(\mu)$  
  for some $\mu\in X^+$ and collecting like terms, any remaining
  $\chi(\mu)$ with
  a non-zero coefficient has
  $d(\mu)<d(\lambda)$.
  \end{lemma}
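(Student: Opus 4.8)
The plan I would follow --- parts (i) and (ii) being short bookkeeping while the substance is in (iii) --- runs as follows. For (i), I would expand the inner sum of (\ref{sum}) and count the contributing index pairs $(\alpha,m)$. Fix $\alpha\in\Phi^+$ and write $\langle\lambda+\rho,\alpha^\vee\rangle=n_\alpha p+d_\alpha$ with $0<d_\alpha\le p$, as in the paragraph preceding (\ref{sumdlambda}). A positive integer $m$ satisfies $0<mp<\langle\lambda+\rho,\alpha^\vee\rangle$ exactly when $m<n_\alpha+d_\alpha/p$, and since $0<d_\alpha/p\le1$ this holds precisely for $m=1,\dots,n_\alpha$. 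Summing the count $n_\alpha$ over $\alpha\in\Phi^+$ and invoking (\ref{sumdlambda}) gives $d(\lambda)$ terms. For (ii), note that $\lambda\in X^+$ makes $\lambda+\rho$ dominant, so, writing $\alpha_0^\vee$ for the highest coroot, $\alpha_0^\vee-\alpha^\vee$ is a non-negative integral combination of simple coroots, whence $\langle\lambda+\rho,\alpha^\vee\rangle\le\langle\lambda+\rho,\alpha_0^\vee\rangle\le b$ for every $\alpha\in\Phi^+$. Thus every $mp$ occurring in (\ref{sum}) has $mp\le b-1$; writing $mp=p^c u$ with $p\nmid u$ gives $p^c\le mp\le b-1$, so $v_p(mp)=c\le\log_p(b-1)$, and since $c$ is an integer, $v_p(mp)\le\lfloor\log_p(b-1)\rfloor$.

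For (iii) the plan is to route through the order $\uparrow$ and the identity $d(\lambda)=d(C)$ of \cite[II.6.6, Remark]{Jan03}. Because $V(\lambda)$ has simple head $L(\lambda)$ and $V(\lambda)/V(\lambda)^1\cong L(\lambda)$ by Theorem \ref{sumformT}, the module $V(\lambda)^1$ is the radical of $V(\lambda)$, so each $V(\lambda)^i$ with $i\ge1$ lies inside $\rad V(\lambda)$, all of whose composition factors $L(\mu)$ satisfy $\mu\uparrow\lambda$ and $\mu\ne\lambda$ by the strong linkage principle \cite[II.6.13]{Jan03}. Hence $\sum_{i>0}\ch V(\lambda)^i$ is a non-negative integral combination of such $\ch L(\mu)$. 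Since the expansion of $\chi(\mu)$ in the $\ch L(\nu)$ is $\uparrow$-triangular with unit diagonal (strong linkage applied to each $V(\mu)$), its inverse is too, so we may rewrite the sum as $\sum_\nu c_\nu\chi(\nu)$ where every $\nu$ with $c_\nu\ne0$ satisfies $\nu\uparrow\lambda$ and $\nu\ne\lambda$ (the latter using antisymmetry of $\uparrow$). On the other hand, rewriting each $\chi(s_{\alpha,mp}\cdot\lambda)$ on the right-hand side of (\ref{sum}) as $\pm\chi(\mu)$ with $\mu\in X^+$ and collecting produces this same element of the representation ring, now written in the $\Z$-basis $\{\chi(\mu)\}_{\mu\in X^+}$; hence the surviving $\mu$ are exactly the $\nu$ above. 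Finally, given such a surviving $\mu$, let $C$ and $C'$ be the alcoves with $\mu\in\widehat C$ and $\lambda\in\widehat{C'}$: then $\mu\uparrow\lambda$ gives $C\uparrow C'$, and since $\mu\ne\lambda$ and two distinct $\uparrow$-comparable weights cannot lie in the upper closure of one alcove, $C\ne C'$; so a longest chain $C_\mathbf Z\uparrow\dots\uparrow C$ extends by at least one more step to one ending at $C'$, whence $d(\mu)=d(C)<d(C')=d(\lambda)$.

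The one genuinely structural ingredient is the behaviour of $\uparrow$: the strong linkage principle, together with the facts used at the end of (iii) that $\uparrow$ on weights refines $\uparrow$ on alcoves and that $d(C)$ strictly increases along proper $\uparrow$-relations of alcoves. These are standard parts of the machinery in \cite[II.6]{Jan03} but should be cited precisely; the lattice-point count in (i), the $p$-adic estimate in (ii), and the reduction of (iii) to monotonicity of $d$ are all routine.
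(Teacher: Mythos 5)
Your proposal is correct and follows essentially the same route as the paper: (i) and (ii) are the same direct counts, and in (iii) you invoke the strong linkage principle to see that every surviving $\chi(\mu)$ has $\mu\uparrow\lambda$ and then conclude $d(\mu)<d(\lambda)$, exactly as the paper does. Your write-up merely makes explicit the details the paper leaves as ``immediate'' (unitriangularity of the $\chi$/$\ch L$ change of basis, the exclusion $\mu\neq\lambda$ via $V(\lambda)^1\subseteq\rad V(\lambda)$ and antisymmetry, and the strict increase of $d$ along proper $\uparrow$-chains of alcoves), which is a harmless elaboration rather than a different argument.
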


\begin{proof}(i) This is immediate from equation (\ref{sumdlambda}).

(ii) As $\lambda$
  is dominant, one has $\langle\lambda+\rho,\alpha^\vee\rangle\leq
  \langle\lambda+\rho,\alpha_0^\vee\rangle$ for any
  $\alpha\in\Phi^+$. The result is now clear.
  
  (iii) 
As noted above  we can express the non-zero $\chi(\nu)$ in the RHS of
\eqref{sum}
  as $\pm\chi(\mu)$ for $\mu\in X^+$. 
  
  Now the strong linkage principle implies that any composition factor
  $L(\mu)$ of $V(\lambda)$ satisfies $\mu\uparrow\lambda$. It follows
  that after complete expansion and rewriting of the terms in
  (\ref{sum}) as $\chi(\mu)$ with $\mu\in X^+$ (as above), $\mu$ must
  satisfy $\mu\uparrow\lambda$. Now it is immediate that
  $d(\mu)<d(\lambda)$.
    \end{proof}
  
In \cite{Boe01} the maximum value of $d(\lambda)$ is calculated for
$\lambda$ satisfying $\langle\lambda+\rho,\alpha_0^\vee\rangle<(k+1)p$
in terms of $p$; also when $\lambda$ is in the Jantzen region. The
next lemma gives the maximum value of $d(\lambda)$ for $\lambda\in
X_1$. For $p\geq h$ this value is independent of $p$ and is an upper
bound for $p<h$.

  \begin{lemma}\label{restrictedDLambda} When $p\geq h$, the maximum
    value of $d(\lambda)$ for $\lambda$ in $X_1$ is 
\begin{enumerate}
  \item $\frac{(n-1)n(n+1)}{6}$ for type $A_n$,
  \item $\frac{(n-1)n(4n+1)}{6}$ for types $B_n$ and $C_n$,
  \item $\frac{2(n-2)(n-1)n}{3}$ for type $D_n$,
  \item $120$ for type $E_6$,
  \item $336$ for type $E_7$,
  \item $1120$ for type $E_8$,
  \item $86$ for type $F_4$,
    \item $10$ for type $G_2$.
  \end{enumerate}
In particular, $d(\lambda)<\lfloor h^3/6\rfloor -1$. When $p<h$, the
above numbers are upper bounds for $\max_{\lambda\in X_1}d(\lambda)$. 
   \end{lemma}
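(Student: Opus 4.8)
The plan is to reduce the problem to an explicit optimization over restricted dominant weights. Recall that $d(\lambda)=\sum_{\alpha\in\Phi^+}\lfloor\langle\lambda+\rho,\alpha^\vee\rangle/p\rfloor$. When $p\ge h$ and $\lambda\in X_1$, one has $\langle\lambda+\rho,\alpha^\vee\rangle = \langle\lambda,\alpha^\vee\rangle + \langle\rho,\alpha^\vee\rangle$ with $0\le\langle\lambda,\alpha_i^\vee\rangle\le p-1$ on simple coroots and $\langle\rho,\alpha^\vee\rangle\le h-1<p$ on all positive coroots; so for a simple root the pairing is $<2p$ and the floor is $0$ or $1$. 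For a general positive root $\alpha=\sum c_i\alpha_i$, the height of $\alpha^\vee$ (suitably interpreted for non-simply-laced types) controls how many multiples of $p$ can fit. The first step is therefore to write $\langle\lambda+\rho,\alpha^\vee\rangle$ in terms of the coefficients of $\alpha^\vee$ expressed in the fundamental coweight basis paired against $\lambda$, plus the constant $\langle\rho,\alpha^\vee\rangle$, and observe that for $p\ge h$ the quantity $\lfloor\langle\lambda+\rho,\alpha^\vee\rangle/p\rfloor$ depends only on the ``shape'' of $\alpha^\vee$ and not on $p$: scaling $\lambda$ towards the extreme weight $(p-1)\rho^\vee$-type configuration makes every floor attain its maximal possible value simultaneously. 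This is the key uniformity observation, and it is exactly why the answer is $p$-independent.

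The second step is to evaluate $\max_{\lambda\in X_1}d(\lambda)$ explicitly. Taking $\lambda$ with $\langle\lambda,\alpha_i^\vee\rangle=p-1$ for all $i$ (the ``top'' of the restricted box), one gets $\langle\lambda+\rho,\alpha^\vee\rangle$ close to $p$ times (the coefficient sum of $\alpha^\vee$) so that $\lfloor\cdot/p\rfloor$ equals $\mathrm{ht}(\alpha^\vee)-1$ for each $\alpha\in\Phi^+$ (being careful near the boundary: the constant $\langle\rho,\alpha^\vee\rangle$ must not push the value over the next multiple of $p$, which is where $p\ge h$ is used). Hence $\max d(\lambda)=\sum_{\alpha\in\Phi^+}(\mathrm{ht}(\alpha^\vee)-1) = \big(\sum_{\alpha\in\Phi^+}\mathrm{ht}(\alpha^\vee)\big)-|\Phi^+|$. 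This sum is a classical invariant computable type-by-type from the list of positive roots (or from the partition of $\sum\alpha^\vee$ into simple coroots); running through $A_n,B_n,C_n,D_n,E_6,E_7,E_8,F_4,G_2$ yields the eight displayed formulas. For $A_n$, for instance, $\sum_{\alpha\in\Phi^+}\mathrm{ht}(\alpha)=\binom{n+2}{3}$ and $|\Phi^+|=\binom{n+1}{2}$, giving $\binom{n+2}{3}-\binom{n+1}{2}=\frac{(n-1)n(n+1)}{6}$, and similarly for the others.

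The third step handles $p<h$. Here the claim is only that the same numbers are upper bounds. For any $\alpha$, $\lfloor\langle\lambda+\rho,\alpha^\vee\rangle/p\rfloor \le \langle\lambda+\rho,\alpha^\vee\rangle/p$, and bounding $\langle\lambda,\alpha_i^\vee\rangle\le p-1$ and $\langle\rho,\alpha^\vee\rangle\le\mathrm{ht}(\alpha^\vee)\cdot(\text{something})$ one still gets $\lfloor\langle\lambda+\rho,\alpha^\vee\rangle/p\rfloor\le\mathrm{ht}(\alpha^\vee)-1$ termwise --- intuitively, smaller $p$ cannot make a floor larger once $\lambda$ is confined to $X_1$, because shrinking $p$ shrinks the box $X_1$ proportionally. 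Summing gives the bound. Finally, for the ``in particular'' clause $d(\lambda)<\lfloor h^3/6\rfloor-1$: since $h$ is the Coxeter number, $|\Phi^+|=\frac{nh}{2}$ and $\mathrm{ht}(\alpha^\vee)\le h-1$, so crudely $\max d(\lambda)\le|\Phi^+|(h-2)<\frac{nh}{2}(h-1)$; checking that $\frac{nh(h-1)}{2}<\lfloor h^3/6\rfloor-1$ for all the relevant $(n,h)$ pairs (using $n\le h-1$ in the classical types and direct inspection in the exceptional cases) finishes it, and one can also just verify the inequality directly against the eight explicit values.

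I expect the main obstacle to be the boundary bookkeeping in Step 2: one must choose $\lambda\in X_1$ so that \emph{every} floor $\lfloor\langle\lambda+\rho,\alpha^\vee\rangle/p\rfloor$ hits $\mathrm{ht}(\alpha^\vee)-1$ at once, and verify that the $\rho$-shift never causes an overshoot to $\mathrm{ht}(\alpha^\vee)$ (which would only help, if it could happen) nor an undershoot on any single root; getting the extremal $\lambda$ right in the non-simply-laced types $B_n,C_n,F_4,G_2$, where short and long roots contribute asymmetrically to $\alpha^\vee$, is the fiddly part. The exceptional-type values are then a finite check best done by computer or by consulting tables of positive roots.
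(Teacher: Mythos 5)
Your overall route coincides with the paper's: identify the maximum as $\sum_{\alpha\in\Phi^+}\bigl(\mathrm{ht}(\alpha^\vee)-1\bigr)=\sum_{\alpha\in\Phi^+}\mathrm{ht}(\alpha^\vee)-|\Phi^+|$, attained at the top of the restricted region, and then evaluate this type by type (the paper does exactly this by evaluating $d$ at $(p-2)\rho$, which lies in the interior of the top restricted alcove, using the data in Bourbaki and a computer check for the exceptional types); your type-by-type arithmetic does reproduce the stated values. However, your Step 2 contains a genuine error at precisely the boundary point you flag as fiddly. At your chosen extremal weight, with $\langle\lambda,\alpha_i^\vee\rangle=p-1$ for all $i$, one has $\langle\lambda+\rho,\alpha^\vee\rangle=p\,\mathrm{ht}(\alpha^\vee)$ exactly, so $\lfloor\langle\lambda+\rho,\alpha^\vee\rangle/p\rfloor=\mathrm{ht}(\alpha^\vee)$, not $\mathrm{ht}(\alpha^\vee)-1$: the $\rho$-shift does push the pairing onto the next multiple of $p$. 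Consequently the termwise bound $\lfloor\langle\lambda+\rho,\alpha^\vee\rangle/p\rfloor\le\mathrm{ht}(\alpha^\vee)-1$, which you invoke both to cap the maximum when $p\ge h$ and to get the $p<h$ upper bound, is false as stated -- it fails at the very weight you evaluate -- and an ``overshoot'' would not ``only help'': it would contradict the claimed equality with the listed values. The repair is the convention for $d(\lambda)$: one should take $d(\lambda)=\sum_\alpha n_\alpha$ where $\langle\lambda+\rho,\alpha^\vee\rangle=n_\alpha p+d_\alpha$ with $0<d_\alpha\le p$ (Jantzen's convention, which agrees with the floor only away from walls). With that definition, $n_\alpha\le\mathrm{ht}(\alpha^\vee)-1$ holds termwise for every $\lambda\in X_1$ and every $p$, since $d_\alpha\ge1$ and $\langle\lambda+\rho,\alpha^\vee\rangle\le p\,\mathrm{ht}(\alpha^\vee)$, with equality at the Steinberg weight; alternatively, keep the floor formula but use $(p-2)\rho$ as the witness, which for $p\ge h$ is regular and gives floor exactly $\mathrm{ht}(\alpha^\vee)-1$ on every positive root, as in the paper. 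Either fix makes both the attainment and the $p<h$ upper bound immediate.

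A second, smaller gap: your crude justification of the ``in particular'' clause does not work. From $\max d\le|\Phi^+|(h-2)$ and $|\Phi^+|=nh/2$ you get roughly $\tfrac{nh(h-1)}{2}$, and in type $A_n$ one has $n=h-1$, so this is of order $h^3/2$, well above $\lfloor h^3/6\rfloor-1$; the inequality $d(\lambda)<\lfloor h^3/6\rfloor-1$ must instead be checked directly against the eight explicit values (your stated fallback), which does succeed. With these two repairs the proposal is essentially the paper's proof, phrased via direct evaluation of the defining sum rather than via the citation to the alcove-length formula of Jantzen II.6.6(1).
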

   \begin{proof} It does no harm to assume that $p\geq h$, for this
     calculation. Then \cite[II.6.6(1)]{Jan03} gives the length of the
     Weyl group element taking the lowest alcove $C$ to $D$
     as \[d(D)=\sum_{\alpha\in
       \Phi^+}\left\lfloor\frac{\langle\lambda+\rho,\alpha^\vee\rangle}{p}\right\rfloor\]
for $\lambda\in D$.  Since $(p-2)\rho$ is in the interior of the top
restricted alcove, one may now check the formulae of the lemma 
using the data in \cite{Bourb82}. For the exceptional types we
used the program {\tt minexp} from computer package {\tt
  Dynkin}\footnote{See
  \url{http://www.math.rutgers.edu/~asbuch/dynkin/}.}.\end{proof}
  
  \begin{remark} For $p\geq h$, $0\in C$ and so the lemma gives the
    maximum length of an affine Weyl group element $w$ so that $w\cdot
    0$ is restricted. When $p<h$ the stabiliser of $0$ in the affine
    Weyl group will be non-trivial. Thus the maximum length of an
    affine Weyl group element $w\cdot 0$ which is restricted will be
    strictly less than this, meaning that the expressions above
    represent strict upper bounds on this length.\end{remark}


\section{Bounding the length of Weyl modules}

Recall from \cite[II.5.8(a)]{Jan03} that the $\ch L(\nu)$ with $\nu\in
X^+$ are a basis of $\mathbb Z[X(T)]^W$. It follows that if $M$ is a
$G$-module and one writes $\ch M=\sum_{\nu\in X^+}a_\nu\ch L(\nu)$
one has $[M:L(\nu)]=a_\nu$, the multiplicity of $L(\nu)$
as a composition factor of $M$. We write $\ell(M)$ for $\sum\nu a_\nu$

\begin{theorem}\label{weyllength}Let $\lambda\in X^+$ with
  $\langle\lambda+\rho,\alpha_0^\vee\rangle\leq b$. Then the length
  $\ell(V)$ (i.~e.~number of composition factors) of the Weyl module
  $V=V(\lambda)$ is
  bounded by a constant \[\frac{z_p^{d(\lambda)+1}-1}{z_p-1},\] where
  $z_p=d(\lambda)\log_p(b-1).$
In particular $\ell(V)\leq \frac{z_2^{l(w)+1}-1}{z_2-1}$.
\end{theorem}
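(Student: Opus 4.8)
The plan is to bound $\ell(V(\lambda))$ by induction on $d(\lambda)$, using the sum formula (Theorem~\ref{sumformT}) to express the "non-head" part of the Weyl module in terms of characters of Weyl modules with strictly smaller $d$-value. The base case $d(\lambda)=0$ means $\lambda$ lies in (the upper closure of) the fundamental alcove $C_\mathbf Z$, so $V(\lambda)=L(\lambda)$ is simple and $\ell(V)=1$, which is exactly the value $\frac{z_p^{1}-1}{z_p-1}=1$ predicted by the formula; I should double-check this edge case, since if $z_p=1$ the geometric-series expression is literally $\frac{0}{0}$ and one must read it as $d(\lambda)+1$.

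For the inductive step, I would argue as follows. From the sum formula, $\ch\bigl(V(\lambda)^1\bigr)=\sum_{i>0}\ch V(\lambda)^i$ as \emph{sets of composition factors with multiplicity} dominated by the right-hand side of \eqref{sum} (the filtration layers' characters sum to the RHS, so in particular every composition factor of $V(\lambda)^1$, counted with multiplicity, appears in the RHS). Hence $\ell(V(\lambda))=1+\ell(V(\lambda)^1)\leq 1+\sum_{\alpha,m}v_p(mp)\,\ell\bigl(V(s_{\alpha,mp}\cdot\lambda)\bigr)$, where I first rewrite each $\chi(s_{\alpha,mp}\cdot\lambda)$ as $\pm\chi(\mu)$ with $\mu\in X^+$ and note that $\chi(\mu)=\ch V(\mu)$ so its composition-factor content is that of $V(\mu)$. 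By Lemma~\ref{estimates}(i) there are $d(\lambda)$ such terms, by Lemma~\ref{estimates}(ii) each coefficient $v_p(mp)$ is at most $\lfloor\log_p(b-1)\rfloor\leq\log_p(b-1)$, and by Lemma~\ref{estimates}(iii) each resulting dominant $\mu$ satisfies $d(\mu)<d(\lambda)$, hence $d(\mu)\leq d(\lambda)-1$. Applying the inductive hypothesis $\ell(V(\mu))\leq\frac{z_p^{d(\mu)+1}-1}{z_p-1}\leq\frac{z_p^{d(\lambda)}-1}{z_p-1}$ (the bound is monotone in $d$ since $z_p\geq1$), I obtain
\[
\ell(V(\lambda))\;\leq\;1+d(\lambda)\cdot\log_p(b-1)\cdot\frac{z_p^{d(\lambda)}-1}{z_p-1}\;=\;1+z_p\cdot\frac{z_p^{d(\lambda)}-1}{z_p-1}\;=\;\frac{z_p^{d(\lambda)+1}-1}{z_p-1},
\]
using $z_p=d(\lambda)\log_p(b-1)$ and the identity $1+\frac{z_p(z_p^{d}-1)}{z_p-1}=\frac{z_p^{d+1}-1}{z_p-1}$. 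This closes the induction. For the final ``in particular'' claim, take $p=2$, so $\log_p(b-1)$ is largest, and recall from the discussion before Lemma~\ref{restrictedDLambda} and from \cite[II.6.6(1)]{Jan03} that $d(\lambda)=l(w)$ where $w$ is the Weyl group element carrying the fundamental alcove to the alcove containing $\lambda$; substituting $z_2$ for $z_p$ and $l(w)$ for $d(\lambda)$ gives $\ell(V)\leq\frac{z_2^{l(w)+1}-1}{z_2-1}$.

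The one genuine subtlety—and the step I expect to need the most care—is the passage from the character identity \eqref{sum} to an inequality on composition-factor \emph{counts}. The sum formula is an equality of characters among the \emph{filtration layers} $V(\lambda)^i$, so $\sum_{i>0}\ch V(\lambda)^i$ is a genuine character (nonnegative combination of $\ch L(\nu)$'s), whereas the RHS of \eqref{sum}, written in the $\chi(\mu)$ basis, can have coefficients of either sign before one rewrites and cancels. I need to be careful that I am bounding $\ell(V(\lambda)^1)=\sum_{i>0}\ell\bigl(\mathrm{gr}_i\bigr)$ by the total with-multiplicity content of the RHS \emph{after} expressing everything in terms of $\ch L(\nu)$, and that this is legitimate because $\sum_{i>0}\ch V(\lambda)^i$ equals the RHS, so when both sides are written in the $\ch L(\nu)$-basis the (necessarily nonnegative) coefficients on the left are bounded by the sum of absolute values of contributions on the right; equivalently, since each $\chi(\mu)=\ch V(\mu)$ and composition multiplicities of $V(\mu)$ are nonnegative, the crude bound $\ell(V(\lambda)^1)\leq\sum v_p(mp)\,\ell(V(\mu))$ holds because any cancellation only decreases the count. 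Once this bookkeeping is pinned down, the rest is the clean induction above.
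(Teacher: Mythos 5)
Your proposal is correct and follows essentially the same route as the paper: induction on $d(\lambda)$ with the base case from the strong linkage principle, and the inductive step obtained by expanding the right-hand side of the sum formula via Lemma \ref{estimates}(i)--(iii) and bounding the with-multiplicity simple-character content of each $\chi(s_{\alpha,mp}\cdot\lambda)=\pm\chi(\mu)$ by $\ell(V(\mu))$. Your extra care about possible sign cancellations and about monotonicity of the bound in $d$ only makes explicit what the paper's "clearly" leaves implicit.
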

\begin{proof} We prove this by induction on the value of
  $d(\lambda)$. If $d(\lambda)=0$ then the strong linkage principle
  implies that $V(\lambda)=H^0(\lambda)=L(\lambda)$, thus $\ell(V)=1$
  and  we are done. 

For $\mu\in X(T)$ (not necessarily dominant) let $X(\mu)\in \mathbb N$
be the number of simple characters counted with multiplicity appearing in
a decomposition of $\chi(\mu)$ into simple characters. Since
$\pm\chi(\mu)$ is the character of $V(w\cdot\mu)$ with $w\cdot\mu\in
X^+$ for some $w\in W$, we have $X(\mu)=\ell(V(w\cdot\mu))$. Consider
equation (\ref{sum}) applied to $V(\lambda)$. Clearly, we
have \begin{align*}\ell(V(\lambda))&\leq 1+\sum_{\alpha\in
    \Phi^+}\sum_{0<mp<\langle\lambda+\rho,\alpha^\vee\rangle}v_p(mp)
X(s_{\alpha,mp}\cdot\lambda)\\
&\leq 1+d(\lambda)\max_{\alpha\in\Phi^+}\max_{0<mp<\langle\lambda+\rho,\alpha^\vee\rangle} v_p(mp)
X(s_{\alpha,mp}\cdot\lambda)\tag{by Lemma \ref{estimates}(i)}\\
&\leq 1+d(\lambda)\max_{\alpha\in\Phi^+}\max_{0<mp<\langle\lambda+\rho,\alpha^\vee\rangle} \left\lfloor\log_p(b-1)\right\rfloor
X(s_{\alpha,mp}\cdot\lambda)\tag{by Lemma \ref{estimates}(ii)}\\
&\leq 1+d(\lambda) \left\lfloor\log_p(b-1)\right\rfloor\max_{d(\mu)<d(\lambda)}X(\mu)\tag{by Lemma \ref{estimates}(iii)}\\
&\leq 1+z_p\frac{z_p^{d(\lambda)}-1}{z_p-1}\tag{by inductive hypothesis}\\
&\leq \frac{z_p^{d(\lambda)+1}-1}{z_p-1}
\end{align*}
as required. The remaining statement is clear, since $z_p\leq z_2$.
\end{proof}

\begin{remark} Using \cite[Theorem 1.1]{Boe01} one can find the
  maximum value $d=\max d(\lambda)$ where the maximum is over all
  $\lambda$ satisfying
  $\langle\lambda,\alpha^\vee\rangle<(k+1)p$. Therefore, for such a
  $\lambda$, one may replace $d(\lambda)$ with $d$ in the conclusion
  of Theorem \ref{weyllength}.\end{remark}

Using the estimates for $d(\lambda)$ in Lemma \ref{restrictedDLambda}
we can now get bounds for the lengths of Weyl modules with restricted
high weights in each type. We state as a corollary a coarse version of
this bound which is valid for each type of root system.

\begin{corollary} Suppose $\lambda\in X_1$. Then the length $\ell(V)$
  of the Weyl module $V=V(\lambda)$ is bounded by a
  constant \[\frac{z_p^{h^3/6+1}-1}{z_p-1},\] where $z_p=\lfloor h^3/6
  (1+\log_p(h-1))\rfloor \leq\lfloor h^3/6 (1+\log_2(h-1))\rfloor$; if
  $p\geq h$ then we may take $z_p=h^3/6$.\end{corollary}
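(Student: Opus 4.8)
The plan is to derive the corollary as a direct specialization of Theorem \ref{weyllength}, feeding in the numerical bounds on $d(\lambda)$ for restricted weights supplied by Lemma \ref{restrictedDLambda}. First I would observe that for $\lambda\in X_1$ we have $\langle\lambda+\rho,\alpha^\vee\rangle\leq p$ for every $\alpha\in\Phi^+$ when $p\geq h$, and in general $\langle\lambda+\rho,\alpha_0^\vee\rangle\leq (h-1)(p-1)+ \langle\rho,\alpha_0^\vee\rangle \leq$ something of the form $c(h-1)$; more efficiently, since the quantity that actually enters Theorem \ref{weyllength} through Lemma \ref{estimates}(ii) is $\lfloor\log_p(b-1)\rfloor$, and since for $\lambda\in X_1$ one has $\langle\lambda+\rho,\alpha_0^\vee\rangle\leq (h-1)p$ (each coordinate of $\lambda+\rho$ being at most $p$ and $\alpha_0^\vee$ having coefficient sum $h-1$ in the simple coroots, up to the usual adjustment), I would take $b=(h-1)p$, so that $\lfloor\log_p(b-1)\rfloor\leq\lfloor\log_p((h-1)p)\rfloor = 1+\lfloor\log_p(h-1)\rfloor\leq 1+\log_p(h-1)$.

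Next I would bound $d(\lambda)$ uniformly across all types. Lemma \ref{restrictedDLambda} gives the exact maximum in each type, and the final sentence there already records the coarse bound $d(\lambda)<\lfloor h^3/6\rfloor-1$, valid for all $p$ (with the stated numbers being upper bounds when $p<h$). So $d(\lambda)\leq h^3/6$ and in fact $d(\lambda)+1\leq h^3/6$ after absorbing, which is what lets the exponent in the statement read $h^3/6+1$. Substituting into Theorem \ref{weyllength}, the bound becomes $\dfrac{z_p^{d(\lambda)+1}-1}{z_p-1}$ with $z_p = d(\lambda)(1+\log_p(h-1))\leq \lfloor h^3/6\,(1+\log_p(h-1))\rfloor$; one checks that the function $x\mapsto \frac{z^{x}-1}{z-1}$ is increasing in both $x$ and $z$ for $z>1$, so replacing $d(\lambda)$ and $z_p$ by their upper bounds only enlarges the estimate, yielding $\dfrac{z_p^{h^3/6+1}-1}{z_p-1}$ with $z_p$ as claimed. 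The inequality $z_p\leq\lfloor h^3/6\,(1+\log_2(h-1))\rfloor$ is just the fact that $\log_p$ is decreasing in $p$, so $p=2$ gives the worst case; and when $p\geq h$ we have $\log_p(h-1)<1$, in fact one can do slightly better and simply note $\langle\lambda+\rho,\alpha^\vee\rangle\leq p$ forces each $n_\alpha\leq 1$ type estimates, but the clean statement is that $z_p=h^3/6$ suffices since then the $\log_p(h-1)$ contribution to $v_p(mp)$ vanishes (there is no $m$ with $0<mp<p$).

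The only genuinely delicate point is getting the bound $b$ on $\langle\lambda+\rho,\alpha_0^\vee\rangle$ right for restricted $\lambda$ and tracking how it propagates through the $\lfloor\log_p(b-1)\rfloor$ term so that the final $z_p$ matches the asserted formula exactly; this is a bookkeeping matter rather than a conceptual obstacle, and the worst case $p=2$ must be handled with a little care since there $\lfloor\log_2(h-1)\rfloor$ can be comparable to $\log_2 h$. Everything else is monotonicity of the geometric-sum expression plus the already-proved Lemma \ref{restrictedDLambda}, so the proof is short: state $b=(h-1)p$ (or the sharper type-by-type value if one wants tighter constants), invoke Lemma \ref{restrictedDLambda} for $d(\lambda)\leq h^3/6$, invoke Theorem \ref{weyllength}, and simplify.
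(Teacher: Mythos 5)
Your argument is correct and is essentially the paper's own proof: bound $d(\lambda)\leq h^3/6$ via Lemma \ref{restrictedDLambda}, note that for $\lambda\in X_1$ the maximum of $\langle\lambda+\rho,\alpha_0^\vee\rangle$ is $b=p(h-1)$ (attained at the Steinberg weight $(p-1)\rho$), so $\lfloor\log_p(b-1)\rfloor\leq 1+\lfloor\log_p(h-1)\rfloor\leq 1+\lfloor\log_2(h-1)\rfloor$, and substitute into Theorem \ref{weyllength}. (Your opening assertion that $\langle\lambda+\rho,\alpha^\vee\rangle\leq p$ for \emph{all} $\alpha\in\Phi^+$ when $p\geq h$ is false for non-simple roots, but you never use it; the correct reason that $z_p=h^3/6$ suffices for $p\geq h$ is, as you also note, that then $\lfloor\log_p(h-1)\rfloor=0$, i.e.\ every $v_p(mp)$ occurring equals $1$.)
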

\begin{proof} Lemma \ref{restrictedDLambda} tells us that
  $d(\lambda)\leq h^3/6$. Note that the maximum value of
  $b=b(\lambda)$ for $\lambda\in X_1$ occurs when $\lambda=(p-1)\rho$
  is the first Steinberg weight. Then $\langle
  \lambda+\rho,\alpha_0^\vee\rangle=p(h-1)$; so
  that 
\[\lfloor\log_p(b-1)\rfloor\leq\lfloor\log_p(p(h-1))\rfloor=1+\lfloor\log_p(h-1)\rfloor\leq1+\lfloor\log_2(h-1)\rfloor. \qedhere\]
\end{proof}

\section{Bounding the first cohomology groups for algebraic groups}

The point of this section is to reduce the problem of bounding the
dimensions of the spaces $H^1(G,L(\lambda))$ to a question about the
composition factors of Weyl modules with restricted high weights.
\begin{prop}\label{theprop}Let $\lambda=\lambda_0+p\lambda'$ with
  $\lambda_0\in X_1$ and $\lambda'\in X^+$. Then the following
  inequality holds:\[\dim H^1(G,L(\lambda))\leq
  \dim\Hom_G((L(\lambda')^*)^{[1]},H^0(\lambda_0)/L(\lambda_0))+1.\] \end{prop}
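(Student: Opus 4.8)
The plan is to reduce the computation of $H^1(G,L(\lambda))$ for an arbitrary dominant weight to the restricted case by peeling off the $p$-th power twist via a Frobenius-kernel spectral sequence argument. Write $\lambda = \lambda_0 + p\lambda'$ with $\lambda_0 \in X_1$ and $\lambda' \in X^+$; by Steinberg's tensor product theorem $L(\lambda) \cong L(\lambda_0) \otimes L(\lambda')^{[1]}$. The natural tool is the Lyndon--Hochschild--Serre spectral sequence for the first Frobenius kernel $G_1 \trianglelefteq G$, namely $E_2^{i,j} = H^i(G/G_1, H^j(G_1, L(\lambda))) \Rightarrow H^{i+j}(G,L(\lambda))$. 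Since $L(\lambda')^{[1]}$ is trivial as a $G_1$-module, $H^j(G_1, L(\lambda)) \cong H^j(G_1, L(\lambda_0)) \otimes L(\lambda')^{[1]}$ as $G/G_1$-modules, and $G/G_1 \cong G$ via the Frobenius. So $E_2^{i,j} \cong H^i(G, H^j(G_1, L(\lambda_0))^{[-1]} \otimes L(\lambda'))$, where the exponent $[-1]$ indicates that the $G$-structure on $H^j(G_1,L(\lambda_0))$ is already a Frobenius twist (a standard fact, see \cite[I.9.5]{Jan03}).

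The five-term exact sequence then gives
\begin{equation*}
0 \to H^1(G/G_1, L(\lambda_0)^{G_1} \otimes L(\lambda')^{[1]}) \to H^1(G, L(\lambda)) \to H^0(G/G_1, H^1(G_1, L(\lambda_0)) \otimes L(\lambda')^{[1]}).
\end{equation*}
For the left-hand term: $L(\lambda_0)^{G_1}$ is nonzero only when $\lambda_0 = 0$, in which case $L(\lambda) = L(\lambda')^{[1]}$ and $H^1(G, L(\lambda)) \cong H^1(G, L(\lambda'))$; this degenerate case is the source of the additive $+1$, and one handles it by noting it is accounted for in the (trivial) bound. Generically $L(\lambda_0)^{G_1} = 0$ and the left term vanishes. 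For the right-hand term, one needs the structure of $H^1(G_1, L(\lambda_0))$ as a $G$-module. The key input is the standard isomorphism (from \cite[II.12.2]{Jan03} or the identification of $H^1(G_1,-)$ via the "first cohomology of the Frobenius kernel" computations): $H^1(G_1, L(\lambda_0)) \cong \big( \Hom_{G_1}(\text{something}) \big)$; more usefully, $H^0(G, H^1(G_1, L(\lambda_0)) \otimes L(\lambda')^{[1]}) = \Hom_G(L(\lambda')^{[1]}, H^1(G_1, L(\lambda_0))^{*[-1]\,[1]})$, which after untwisting reduces to $\Hom_G((L(\lambda')^*)^{[1]}, H^1(G_1, L(\lambda_0)))$.

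The final step is to bound $H^1(G_1, L(\lambda_0))$, or rather the relevant $\Hom$, by $H^0(\lambda_0)/L(\lambda_0)$. Here one uses that $H^1(G_1, L(\lambda_0))$ embeds into $H^0(\lambda_0)/L(\lambda_0)$ tensored with an appropriate twist; concretely, the long exact sequence coming from $0 \to L(\lambda_0) \to H^0(\lambda_0) \to H^0(\lambda_0)/L(\lambda_0) \to 0$ applied to $G_1$-cohomology, combined with the vanishing $H^1(G_1, H^0(\lambda_0)) = 0$ for $\lambda_0 \in X_1$ (a theorem of Cline--Parshall--Scott--van der Kallen / Andersen, valid since $H^0(\lambda_0)$ restricted to $G_1$ is, up to twist, injective — or one uses $H^0(\lambda_0)|_{G_1}$ being a direct summand situation). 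This yields a $G$-module surjection $(H^0(\lambda_0)/L(\lambda_0))^{G_1} \twoheadrightarrow H^1(G_1, L(\lambda_0))$ up to the Frobenius twist bookkeeping, and applying $\Hom_G((L(\lambda')^*)^{[1]}, -)$ gives the stated inequality, with the $+1$ absorbing the $\lambda_0 = 0$ boundary case. The main obstacle is the careful tracking of Frobenius twists and which cohomology vanishing statement for $H^0(\lambda_0)|_{G_1}$ is being invoked; getting the direction of the twist ($[1]$ versus $[-1]$) right throughout is where the argument is delicate, but each individual ingredient is standard and available in \cite{Jan03}.
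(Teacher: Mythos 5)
Your overall strategy is the same as the paper's: apply the Lyndon--Hochschild--Serre five-term sequence for $G_1\triangleleft G$, reduce to $\Hom_G(L(\lambda')^*,H^1(G_1,L(\lambda_0))^{[-1]})$, and then compare $H^1(G_1,L(\lambda_0))$ with $M=H^0(\lambda_0)/L(\lambda_0)$ via the long exact sequence for $0\to L(\lambda_0)\to H^0(\lambda_0)\to M\to 0$. However, the final step of your argument rests on a false vanishing claim: it is not true that $H^1(G_1,H^0(\lambda_0))=0$ for all $\lambda_0\in X_1$, and $H^0(\lambda_0)|_{G_1}$ is not injective (that holds only for the Steinberg weight $\lambda_0=(p-1)\rho$). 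Already for $\SL_2$ one has $H^1(G_1,H^0(p-2))^{[-1]}\cong H^0(\omega_1)\neq 0$. Consequently there is no surjection $H^0(G_1,M)^{[-1]}\twoheadrightarrow H^1(G_1,L(\lambda_0))^{[-1]}$ in general, and your argument as written would prove the inequality without the ``$+1$'' for $\lambda_0\neq 0$, which is too strong. The actual role of the ``$+1$'' is not, as you suggest, the boundary case $\lambda_0=0$: in the paper it comes precisely from the nonvanishing term $\Hom_G(L(\lambda')^*,H^1(G_1,H^0(\lambda_0))^{[-1]})$, which is bounded by $1$ because, by the Bendel--Nakano--Pillen computations, $H^1(G_1,H^0(\lambda_0))^{[-1]}$ is a direct sum of \emph{distinct} induced modules $H^0(\omega_i)$ with $\omega_i$ fundamental, so a simple module maps into it with multiplicity at most one. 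This input (or some substitute for it) is the missing idea; without it the chain of inequalities does not close.

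A secondary gap is your treatment of $\lambda_0=0$. Saying it is ``accounted for in the trivial bound'' is not an argument: when $\lambda_0=0$ the term you want to discard is $H^1(G,H^0(G_1,k)^{[-1]}\otimes L(\lambda'))=H^1(G,L(\lambda'))$, and one must run an induction on the $p$-adic expansion of $\lambda$ to absorb it; moreover $H^1(G_1,k)$ is not always zero (for $p=2$ in type $C_n$ one has $H^1(G_1,k)^{[-1]}\cong L(\omega_1)$), so this case requires the separate analysis carried out in the paper, where the linkage principle is used to kill $H^1(G,L(\omega_1))$. Your bookkeeping of Frobenius twists and the identification of the first term of the five-term sequence with $\Hom_G((L(\lambda')^*)^{[1]},M)$ is fine; the proof fails only at the two points above, the first of which is essential.
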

\begin{proof} We may as well assume $H^1(G,L(\lambda))\neq 0$. The
  five term exact sequence arising from the Lyndon--Hochschild--Serre
  spectral sequence applied to $G_1\triangleleft G$ implies
  that \begin{equation}\label{5term}\dim H^1(G,L(\lambda))\leq
    \dim\Hom_G(L(\lambda')^*,H^1(G_1,L(\lambda_0))^{[-1]})+\dim
    H^1(G,H^0(G_1,L(\lambda_0))^{[-1]}\otimes
    L(\lambda')).\end{equation} (In fact we have equality by
  \cite[Corollary 2]{Don83}.) 

Suppose first that $\lambda_0=0$. If $p\neq 2$ or $G$ is not of type
$C_n$ then $H^1(G_1,k)=0$, by \cite[II.12.2]{Jan03} so that the first
term on the right-hand side of (\ref{5term}) is zero and $\dim
H^1(G,L(\lambda))=\dim H^1(G,L(\lambda'))=\dim
H^1(G,L(\lambda)^{[-1]})$. Thus, by induction on the length of the
$p$-adic expansion of $\lambda$, we may assume that $\lambda_0\neq
0$. If $p=2$ and $G$ is of type $C_n$ then $H^1(G_1,k)^{[-1]}\cong
L(\omega_1)$, the natural module for $\mathrm{Sp}_{2n}$, by
\cite[loc.~cit.]{Jan03}. Suppose the first term on the right-hand side
of (\ref{5term}) is non-zero. Then $L(\lambda')^*\cong
L(\lambda')\cong L(\omega_1)$. Then the second term on the right-hand
side of (\ref{5term}) is $\dim H^1(G,L(\omega_1))$, which vanishes
(for instance) by the linkage principle, since $\omega_1$ is miniscule
and not linked to $0$. Thus $\dim H^1(G,L(\lambda))=\dim
H^1(G,L(\omega_1)^{[1]})=1$ and we are done. Otherwise the first term
on the right-hand side of (\ref{5term}) is $0$ and we have $\dim
H^1(G,L(\lambda))=\dim H^1(G,L(\lambda'))=\dim
H^1(G,L(\lambda)^{[-1]})$ as before.

Thus we may assume $\lambda_0\neq 0$. Then this time the second term
on the right-hand side of (\ref{5term}) vanishes and we
have \begin{equation}\label{red}\dim H^1(G,L(\lambda))=\dim
  \Hom_G(L(\lambda')^*,H^1(G_1,L(\lambda_0))^{[-1]}).\end{equation}

Consider the right-hand side of (\ref{red}). We have a short exact
sequence of $G$-modules \[H^0(G_1,M)^{[-1]}\to
H^1(G_1,L(\lambda_0))^{[-1]}\to H^1(G_1,H^0(\lambda_0))^{[-1]},\]
where $M=H^0(\lambda_0)/L(\lambda_0)$. Applying
$\Hom_G(L(\lambda')^*,?)$ to this sequence yields a long exact
sequence
containing \begin{align*}\label{longexact}\tag{*}\Hom_G(L(\lambda')^*,H^0(G_1,M)^{[-1]})\to
  \Hom_G&(L(\lambda')^*,H^1(G_1,L(\lambda_0))^{[-1]})\\&\to
  \Hom_G(L(\lambda')^*,H^1(G_1,H^0(\lambda_0))^{[-1]})\end{align*} as
a subsequence. 

Now the first term is isomorphic to
$\Hom_G(k,\Hom_{G_1}((L(\lambda')^*)^{[1]},M))
\cong \Hom_G((L(\lambda')^*)^{[1]},M)$. Furthermore, looking at the
various cases from \cite[\S3 Theorems (A), (B), (C)]{BNP04-Frob} one
sees that $H^1(G_1,H^0(\lambda))^{[-1]}$ is a direct sum of distinct
$H^0(\omega_i)$, where $\omega_i$ is a fundamental dominant
weight.\footnote{The largest number of such summands is three and
  occurs only in the case where $p=2$, $G=D_4$ and $\lambda=\omega_2$;
  under these assumptions $H^1(G_1,H^0(\omega_2))^{[-1]}\cong
  H^0(\omega_1)\oplus H^0(\omega_3)\oplus H^0(\omega_4)$.} Thus the
second term is always at most $1$. Now, since the dimension of the
middle term in (*) is bounded by the sum of the dimensions of the
outer terms, we are done by (\ref{red}).
\end{proof}

\begin{corollary}\label{corbound}For $\lambda\in X^+$, the dimension
  of $H^1(G,L(\lambda))$ is bounded above by the length $\ell(V)$ of a
  Weyl module $V(\lambda)$ with $\lambda\in X_1$.

Hence \[\dim H^1(G,L(\lambda))\leq \frac{z_p^{h^3/6+1}-1}{z_p-1},\]
where $z_p=\lfloor h^3/6 (1+\log_p(h-1))\rfloor \leq\lfloor h^3/6
(1+\log_2(h-1))\rfloor$; if $p\geq h$ then we may take
$z_p=h^3/6$.  \end{corollary}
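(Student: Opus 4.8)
The plan is to read this off directly from Proposition \ref{theprop} together with the length bound for restricted Weyl modules established in Section 4. Fix $\lambda\in X^+$. Since $G$ is simply connected, taking $p$-adic expansions of the coordinates of $\lambda$ lets us write $\lambda=\lambda_0+p\lambda'$ with $\lambda_0\in X_1$ and $\lambda'\in X^+$, and Proposition \ref{theprop} then gives
\[\dim H^1(G,L(\lambda))\leq \dim\Hom_G\!\big((L(\lambda')^*)^{[1]},M\big)+1,\qquad M:=H^0(\lambda_0)/L(\lambda_0).\]

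Next I would note that $S:=(L(\lambda')^*)^{[1]}$ is again a simple $G$-module, being the Frobenius twist of the dual of a simple module. Hence for any finite-dimensional $G$-module $N$ every nonzero map $S\to N$ is injective, so $\dim\Hom_G(S,N)$ equals the multiplicity of $S$ in $\soc N$, which is at most $[N:S]\leq\ell(N)$. Taking $N=M$, and using that $L(\lambda_0)$ occurs exactly once as a composition factor of $H^0(\lambda_0)$ (its highest weight $\lambda_0$ has multiplicity one), we get $\ell(M)=\ell(H^0(\lambda_0))-1$. Moreover $\ch H^0(\lambda_0)=\chi(\lambda_0)=\ch V(\lambda_0)$, so by the linear independence of the $\ch L(\nu)$ recalled at the start of Section 4 the modules $H^0(\lambda_0)$ and $V(\lambda_0)$ have identical composition factors with multiplicity, whence $\ell(H^0(\lambda_0))=\ell(V(\lambda_0))$. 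Combining the inequalities,
\[\dim H^1(G,L(\lambda))\leq\big(\ell(V(\lambda_0))-1\big)+1=\ell(V(\lambda_0)),\qquad \lambda_0\in X_1,\]
which is the first assertion. (If $\lambda_0=0$ then $M=0$ and the estimate just reads $\dim H^1(G,L(\lambda))\leq1=\ell(V(0))$, so this degenerate case is harmless.) The displayed bound then follows at once by inserting the corollary at the end of Section 4, which bounds $\ell(V(\mu))$ for $\mu\in X_1$ by $\frac{z_p^{h^3/6+1}-1}{z_p-1}$ with $z_p$ as in the statement, and by $h^3/6$ when $p\geq h$.

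I do not expect any genuine obstacle here: the substantive inputs---Proposition \ref{theprop}, Theorem \ref{weyllength} and its restricted-weight corollary---are already in hand, and what remains is bookkeeping. The only two points worth stating with care are that bounding $\dim\Hom_G(S,M)$ by $\ell(M)$ loses nothing we need, and that one may freely replace $H^0(\lambda_0)$ by $V(\lambda_0)$ since the two have equal formal character; both are immediate from the basis property of $\{\ch L(\nu):\nu\in X^+\}$.
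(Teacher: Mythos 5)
Your argument is correct and is essentially the paper's own proof, just with the intermediate steps spelled out: the paper likewise applies Proposition \ref{theprop}, bounds $\dim\Hom_G((L(\lambda')^*)^{[1]},H^0(\lambda_0)/L(\lambda_0))$ by the corresponding composition multiplicity, and absorbs the $+1$ using $\ell(H^0(\lambda_0))=\ell(V(\lambda_0))$ (equal characters) before invoking the restricted-weight corollary of Theorem \ref{weyllength}. No gaps; your extra remarks on the socle and the $\lambda_0=0$ case are harmless elaborations of the same route.
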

\begin{proof}This is immediate from Proposition \ref{theprop}: one
  has \[\Hom_G((L(\lambda')^*)^{[1]},H^0(\lambda_0)/L(\lambda_0))+1\leq
  [H^0(\lambda_0)/L(\lambda_0):(L(\lambda')^*)^{[1]}]+1\leq
  \ell(V(\lambda_0)).\]\end{proof}

\section{From $G$-cohomology to $G_\sigma$-cohomology}

Let $\sigma:G\to G$ be a strictly surjective endomorphism of the simple algebraic group $G$. Then
$\sigma=\tau\circ F^r$ for some $r$, where $F$ is a standard Frobenius
map and $\tau$ is a graph automorphism. The fixed points
$G_\sigma=\{g\in G(k):\sigma(g)=g$ is a finite group of Lie type. By a
result of Steinberg, the simple $kG_\sigma$-modules can all be
identified with  the restrictions of (a subset of) $p^r$-restricted
simple $G$-modules. As in \cite[Theorem 7.10]{CPS09} we use a result
of Bendel--Nakano--Pillen to translate our cohomology bound from the
algebraic setting to the finite setting. The result shows that for
each $G$, in all but a finite number of cases of $G_\sigma$, any
cohomology group $H^1(G_\sigma,L)$ with $L$ a simple
$kG_\sigma$-module is isomorphic to $H^1(G,L')$ for some simple
$G$-module $L'$. This allows us to give a bound for
$G_\sigma$-cohomology (in defining characteristic).

We need the following lemma.

\begin{lemma}\label{steinbergbig}
Let $G$ be a simply connected simple algebraic group over an algebraically closed field $k$ of characteristic $p$. Let $\lambda\in X_r$. Then the dimensions of the Weyl module $V(\lambda)$ and its simple head $L(\lambda)$ are
bounded by $p^{r|\Phi^+|}$.

Thus the simple $kG_\sigma$-module of
  largest dimension is the $r$th Steinberg module of dimension
  $p^{r|\Phi^+|}$.
\end{lemma}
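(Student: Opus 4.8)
The plan is to bound $\dim V(\lambda)$ and $\dim L(\lambda)$ uniformly for $\lambda\in X_r$, the $p^r$-restricted weights, and then identify the extremal case. The key point is that $\dim L(\lambda)\leq\dim V(\lambda)$ since $L(\lambda)$ is the simple head of $V(\lambda)$, so it suffices to bound $\dim V(\lambda)$. For this I would use Weyl's dimension formula
\[
\dim V(\lambda)=\prod_{\alpha\in\Phi^+}\frac{\langle\lambda+\rho,\alpha^\vee\rangle}{\langle\rho,\alpha^\vee\rangle}.
\]
Each factor in this product is a ratio of positive integers, and it is an elementary fact (following from the Weyl character formula applied to the representation $V(\lambda)$, or from the description of weights of $V(\lambda)$ as lying below $\lambda$) that every individual factor is at least $1$; more to the point, I would bound each factor by $\langle\lambda+\rho,\alpha^\vee\rangle$, since $\langle\rho,\alpha^\vee\rangle\geq 1$.

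Next I would bound $\langle\lambda+\rho,\alpha^\vee\rangle$ for $\lambda\in X_r$. Writing $\lambda=\sum_i c_i\omega_i$ with $0\leq c_i\leq p^r-1$, and recalling $\rho=\sum_i\omega_i$, we get $\langle\lambda+\rho,\alpha^\vee\rangle=\sum_i(c_i+1)\langle\omega_i,\alpha^\vee\rangle$. The most transparent approach is to observe that this quantity is maximised, over $\lambda\in X_r$, at the Steinberg weight $\lambda=(p^r-1)\rho$, where it equals $p^r\langle\rho,\alpha^\vee\rangle$. Hence
\[
\dim V(\lambda)\leq\prod_{\alpha\in\Phi^+}\frac{\langle\lambda+\rho,\alpha^\vee\rangle}{\langle\rho,\alpha^\vee\rangle}\leq\prod_{\alpha\in\Phi^+}p^r=p^{r|\Phi^+|},
\]
with equality precisely when $\lambda=(p^r-1)\rho$. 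Since the $r$th Steinberg module $\St_r=L((p^r-1)\rho)$ satisfies $V((p^r-1)\rho)=L((p^r-1)\rho)$ (the Steinberg module is irreducible and equals its own Weyl module), its dimension is exactly $p^{r|\Phi^+|}$, and by the bound just established no other restricted simple can be larger. Finally, by Steinberg's tensor product theorem every simple $kG_\sigma$-module is the restriction of a $p^r$-restricted simple $G$-module, so this is the largest simple $kG_\sigma$-module, giving the second assertion.

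I expect the only genuinely delicate point to be justifying that the maximum of $\langle\lambda+\rho,\alpha^\vee\rangle$ over $\lambda\in X_r$ is attained at the Steinberg weight simultaneously for all $\alpha\in\Phi^+$: since each coefficient $\langle\omega_i,\alpha^\vee\rangle$ is nonnegative, increasing any $c_i$ only increases $\langle\lambda+\rho,\alpha^\vee\rangle$, so this is immediate, but it is worth stating carefully. Everything else is routine: the bound $\langle\rho,\alpha^\vee\rangle\geq 1$ holds because $\rho$ is dominant and regular, and the irreducibility of the Steinberg module is standard (see \cite[II.3.18]{Jan03}). Strictly speaking one should also note $\dim L(\lambda)\le\dim V(\lambda)$ and that $\St_r$ is genuinely a $kG_\sigma$-module of the stated dimension, both of which are immediate.
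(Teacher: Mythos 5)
Your argument is correct and is essentially the paper's own proof: both reduce to bounding $\dim V(\lambda)$ via Weyl's dimension formula, observe that the Steinberg weight $(p^r-1)\rho$ maximises $\langle\lambda+\rho,\alpha^\vee\rangle$ over $\lambda\in X_r$ for every positive root (via nonnegativity of $\langle\omega_i,\alpha^\vee\rangle$), conclude each factor is at most $p^r$, and then invoke Steinberg's theorem that every simple $kG_\sigma$-module is the restriction of a $p^r$-restricted simple $G$-module. No substantive difference to note.
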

\begin{proof}
For the first part it clearly suffices to bound the dimension of $V(\lambda)$, which is given by the formula
\cite[Corollary 24.6]{FH91}
$$
\frac{\prod_{\alpha\in
  \Phi^+}\langle\lambda+\rho,\alpha^\vee\rangle}
{\prod_{\alpha\in \Phi^+}\langle\rho,\alpha^\vee\rangle}.
$$
For $\lambda\in X_r$ and $\alpha$ a simple root
$$
\langle\lambda+\rho,\alpha^\vee\rangle \le p^r
= \langle (p^r-1)\rho+\rho,\alpha^\vee\rangle.
$$
In other words, among the Weyl modules with $p^r$-restricted heads,
the $r$th Steinberg
module ($=V((p^r-1)\rho)$) maximises all
the inner products with the simple roots, and hence any non-negative
linear
combination of simple roots, and thus in particular, the positive roots.
Thus the dimension of a Weyl module with $p^r$-restricted head is bounded by the
dimension of the $r$th Steinberg module, which is clearly
$p^{r|\Phi^+|}$ by the above formula.

The second part is clear since any simple $kG_\sigma$-module is
obtained as the restriction to $G_\sigma$ of a $p^r$-restricted simple
$G$-module.
\end{proof}

\begin{prop}\label{theoremFinite} Let $L$ be a simple
  $kG_\sigma$-module and let $h$ be the associated Coxeter
  number of $G_\sigma$. Suppose further than $H^1(G,L')\leq b$ for all simple
  $G$-modules $L'$. Then 
\[\dim H^1(G_\sigma,L)\leq
  \max\left\{b,\frac{1}{2}\left(h^2(3h-3)^3\right)^{\frac{h^2}{2}}\right\}.\]
\end{prop}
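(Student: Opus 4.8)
The plan is to invoke the Bendel--Nakano--Pillen comparison results together with the cohomology bound for the algebraic group $G$ established in Corollary \ref{corbound}. First I would recall the dichotomy from \cite{BNP04} (and its subsequent refinements): for a fixed simple algebraic group $G$, there is an explicit constant, depending only on the root system of $G$ (equivalently only on $h$), such that whenever $p^r$ exceeds this constant, one has an isomorphism $H^1(G_\sigma,L)\cong H^1(G,L')$ for a suitable simple $G$-module $L'$. In that ``large $q$'' regime we simply apply the hypothesis $\dim H^1(G,L')\leq b$, so the bound $b$ suffices. The remaining work is entirely in the ``small $q$'' regime, where $p^r$ is bounded by the aforementioned function of $h$; there are only finitely many such pairs $(p,r)$ for each fixed $G$, and for these we bound $\dim H^1(G_\sigma,L)$ by brute force.

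For the small-$q$ case the crude estimate I would use is $\dim H^1(G_\sigma,L)\leq \dim H^1(G_\sigma,L)\leq \dim L\cdot |G_\sigma|$ or, better, a bound coming from the bar resolution: $\dim H^1(G_\sigma,L)$ is at most $\dim L$ times the number of generators of $G_\sigma$, or most simply at most $\dim L\cdot\dim(\text{something polynomial in }|G_\sigma|)$. The cleanest route is: $\dim H^1(G_\sigma,L)\leq (\dim L)\cdot|G_\sigma|$, then bound $|G_\sigma|$ by $q^{\dim G}\leq q^{h^2}$ roughly (using $\dim G = l(l+1+2|\Phi^+|/l)$, which is $O(h^2)$ since $l<h$ and $|\Phi^+|<h^2/2$), and bound $\dim L\leq q^{|\Phi^+|}$ by Lemma \ref{steinbergbig}. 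Then one needs $q=p^r$ to be no larger than the BNP threshold, which is of the order $(3h-3)^3$ or similar (this is where the expression $h^2(3h-3)^3$ comes from: it is an upper bound for $\dim G_\sigma\cdot$ something, raised to a power controlling $\dim L\cdot|G_\sigma|$). Combining $\dim L\cdot|G_\sigma|\leq q^{|\Phi^+|+\dim G}\leq q^{h^2}$ with $q\leq (3h-3)^3\cdot(\text{const})$ and tracking the constants carefully gives exactly $\tfrac12(h^2(3h-3)^3)^{h^2/2}$.

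The precise steps, in order: (1) state the BNP comparison theorem and extract from it the explicit bound $N(h)$ on $p^r$ below which the comparison may fail; (2) for $p^r\geq N(h)$, conclude $\dim H^1(G_\sigma,L)\leq b$ directly; (3) for $p^r< N(h)$, use $\dim H^1(G_\sigma,L)\leq (\dim L)|G_\sigma|$ (which follows from, e.g., the fact that $H^1$ is a subquotient of $\Hom_k(kG_\sigma,L)$ or from dimension-shifting against a projective cover), then substitute $\dim L\leq p^{r|\Phi^+|}$ from Lemma \ref{steinbergbig} and $|G_\sigma|\leq (p^r)^{\dim G}$; (4) bound $\dim G$ and $|\Phi^+|$ in terms of $h$, bound $p^r$ by $N(h)$, and simplify to reach $\tfrac12(h^2(3h-3)^3)^{h^2/2}$; (5) take the maximum of the two regimes.

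The main obstacle I anticipate is not any deep mathematics but the bookkeeping in step (4): one must choose $N(h)$ compatibly with the BNP statement (there are case divisions depending on the type of $\sigma$ and on whether $p$ is small) and then show that the resulting elementary bound genuinely collapses into the stated closed form $\tfrac12(h^2(3h-3)^3)^{h^2/2}$, using inequalities such as $l\leq h-1$, $|\Phi^+|\leq l h/2 \leq h^2/2$, $\dim G\leq h^2$, and $\dim G_\sigma\leq \dim G$. Getting every exponent and the leading constant $\tfrac12$ to match requires care, but each individual inequality is routine; the only subtlety is ensuring the BNP threshold one quotes is at most a polynomial in $h$ of the claimed shape, so that raising it to the power $h^2/2$ does not overshoot.
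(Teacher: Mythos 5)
Your overall architecture (BNP comparison for large $q$, giving the bound $b$; brute force for the finitely many small $q$) matches the paper's, but your brute-force step contains a genuine gap that prevents the stated constant from coming out. You propose $\dim H^1(G_\sigma,L)\leq (\dim L)\cdot|G_\sigma|$ (or $\dim L$ times a number of generators) and then multiply $\dim L\leq q^{|\Phi^+|}$ by $|G_\sigma|\leq q^{\dim G}$. Since $\dim G=l+|\Phi|$ is roughly $h^2$, this yields an exponent of order $|\Phi^+|+\dim G\approx 3h^2/2$, i.e.\ a bound of the shape $\left(h^2(3h-3)^3\right)^{3h^2/2}$, which is essentially the cube of the claimed quantity; no bookkeeping in your step (4) can collapse this to $\tfrac12\left(h^2(3h-3)^3\right)^{h^2/2}$. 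The ingredient you are missing is the Guralnick--Hoffman theorem (cited in the paper as \cite{GH98}): for a finite group and an irreducible module one has $\dim H^1(G_\sigma,L)\leq \tfrac12\dim L$. That single inequality both supplies the leading factor $\tfrac12$ and keeps the exponent at $|\Phi^+|\leq h^2/2$, with no factor of $|G_\sigma|$ appearing at all. Even the sharper generator-counting bound $2\dim L$ (from $2$-generation of finite simple groups) would miss the stated constant by a factor of $4$.

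Two smaller discrepancies with the paper's argument: the BNP threshold is not a single inequality on $q$ but a case division --- (i) $r\geq 2$ with $p^{\lfloor r/2\rfloor-1}(p-1)>h$ (untwisted comparison, \cite[Theorem 5.5]{BNP06}) or (ii) $p\geq 3h-3$ (\cite[Theorem 5.1]{BNPtw}) --- and in the complementary regime one deduces either $q<3h-3$ (when $r=1$) or $r\leq 2\log_p h+3$, whence $q\leq h^2p^3\leq h^2(3h-3)^3$; this is exactly where the expression $h^2(3h-3)^3$ originates, not from an estimate involving $|G_\sigma|$. Finally, the type $A_1$ case is excluded from the BNP machinery and is handled separately via \cite[Corollary 4.5]{AJL83}, which your proposal does not address.
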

\begin{proof} 
Recall that $\sigma=\tau\circ F^r$ for $F$ a standard
  Frobenius automorphism. Let $q=p^r$ and set
  $s=\left\lfloor\frac{r}{2}\right\rfloor$. The case where $G$ is of
  type $A_1$ follows from \cite[Corollary 4.5]{AJL83}, so assume
  otherwise. Now, if either (i): $r\geq 2$ and $p^{s-1}(p-1)>h$; or
  (ii): $p\geq 3h-3$, we have $\dim H^1(G_\sigma,L)=\dim H^1(G,L')$
  for some simple $G$-module $L'$ by \cite[Theorem 5.5]{BNP06} and
  \cite[Theorem 5.1]{BNPtw} respectively. Thus we are done in either
  of these cases.

Thus we may assume that $p<3h-3$ and either (a) $r=1$ or (b)
$p^{s-1}(p-1)\leq h$. In case (a), this implies that $q<3h-3$. In case
(b), $s\leq\log_p h$ so that $r\leq 2\log_p h+3$. Then $q=p^r\leq
h^2p^3\leq h^2(3h-3)^3$. 

Now by \cite{GH98}, we have $\dim H^1(G_\sigma,L)\leq 1/2\cdot\dim L$
and by Lemma \ref{steinbergbig}, $\dim L$ is at most $q^{|\Phi^+|}$. It
is easy to check the tables in \cite{Bourb82} to see that $|\Phi^+|$
is no bigger than $h^2/2$ (often with agreement). Thus in case (a) we
have $\dim H^1(G_\sigma,L)\leq 1/2\ (3h-3)^{h^2/2}$ and in case (b) we
have \[\dim H^1(G_\sigma,L)\leq 1/2
\left(h^2(3h-3)^3\right)^{\frac{h^2}{2}}.\] In either case, the
theorem holds.
\end{proof}

We may now tackle the proof of our first two main theorems.

\begin{proof}[Proof of Theorem A]
If $G$ is a Ree or Suzuki group, then $H^1(G,V)\leq 2$ by
\cite{Sin92}, \cite{Sin93} or \cite{SinF4}. Otherwise $G$ is the
quotient by $Z(G)$ of some $H_\sigma$ with $H$ a simply connected algebraic group. Hence
we can lift $V$ to a simple module for $H_\sigma$. Then a
Lyndon--Hochschild--Serre spectral sequence argument gives that $\dim
H^1(G_\sigma,V)=\dim H^1(G,V)$. Now the result follows from Theorem
\ref{theoremFinite} in combination with Corollary \ref{corbound}.
\end{proof}

\begin{proof}[Proof of Theorem B]
The case in cross-characteristic using \cite{GT11} is easy: by that result one has $H^1(G,V)\leq |W|+e$ for $e$ the twisted rank of $G$. Now $|W|$ is no bigger than $2^{l}l!$, so $O(\log(|W|+e))=O(\log |W|)=O(l\log l)$. For the defining characteristic case, note that the Coxeter number of $h$ is linear with the rank of
$G$. The theorem then follows by taking $\log$s of both sides of the
inequality in Theorem A.
\end{proof}

\section{If the Lusztig Character Formula holds}
In \cite{CPS09}, the authors prove a bound on $H^1(G,V)$ for $G$ a semisimple
algebraic group. This is generalised in \cite[Lemma 5.2]{PS11} (which
is also easier to read). They
show \[\dim\Ext^1_G(L(\lambda),L(\mu))\leq p^{|\Phi|}\p(2(p-1)\rho).\]
Hence the same bound will work in the case of cohomology (the case
$\lambda=0$). Suppose $p\geq p_0$ such that the Lusztig character
formula holds (such a $p_0$ is guaranteed to exist by
\cite{AJS94}). Then one may replace $p$ in the above expression by the
fixed value $p_0$. Thus they can assert the existence of an (implicit)
bound on the dimensions of $1$-cohomology. If the Lusztig conjecture
holds, one may replace $p$ in the above formula with any prime at
least as big as $h$ (so certainly $2h$ will work).

Approaching the problem from the view of composition factors of Weyl
modules, we improve this bound in the case of cohomology;
in particular, we can remove the dependency on the Kostant
function. Thus, if the Lusztig conjecture were to hold, we achieve a
better bound than that given in Corollary \ref{corbound}.

\begin{prop}\label{lcfcor}
Let $V=V(\lambda)$ be a Weyl module with $p$-restricted head. Then $\ell(V)\leq p^{|\Phi^+|}$.

If the Lusztig character formula holds for all $p\geq p_0$, then
$\ell(V)\leq p_0^{|\Phi^+|}$. In particular, if the Lusztig conjecture
holds, then $\ell(V)\leq (2h)^{|\Phi^+|}\leq (2h)^{h^2/2}\leq
h^{h^2}$.
\end{prop}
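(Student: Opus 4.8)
The plan is to bound $\ell(V(\lambda))$ for a Weyl module with $p$-restricted head directly from the known dimension of $V(\lambda)$, combined with the fact that every composition factor $L(\mu)$ has dimension at least $1$. More precisely, by Lemma~\ref{steinbergbig}, any Weyl module $V(\lambda)$ with $\lambda\in X_1$ has $\dim V(\lambda)\leq p^{|\Phi^+|}$, with equality exactly for the Steinberg module $V((p-1)\rho)=\St$. Since each composition factor contributes at least $1$ to the dimension, we get $\ell(V)\leq\dim V(\lambda)\leq p^{|\Phi^+|}$, which is the first assertion. I would remark that this is genuinely weaker than the bound of Theorem~\ref{weyllength} only when $p$ is small relative to $h$; the point of the present proposition is what the \emph{Lusztig character formula} buys us.

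For the second assertion, the key observation is that the quantity $\ell(V(\lambda))$ for $\lambda\in X_1$ is determined by the composition multiplicities $[V(\lambda):L(\mu)]$ with $\mu\uparrow\lambda$ restricted, and under the Lusztig character formula these multiplicities are given by evaluating Kazhdan--Lusztig polynomials at $1$ and are therefore the \emph{same} for all $p\geq p_0$ (see \cite{AJS94}, \cite{Jan03}). Concretely, write $\ch V(\lambda)=\sum_\mu a_\mu\,\ch L(\mu)$; the $a_\mu$ are independent of $p$ once $p\geq p_0$. Hence $\ell(V(\lambda))=\sum_\mu a_\mu$ is independent of $p$ in that range, so we may compute it at $p=p_0$. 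Applying the dimension bound of the first part at the prime $p_0$ then yields $\ell(V(\lambda))\leq p_0^{|\Phi^+|}$ for \emph{every} $p\geq p_0$ at which the LCF is assumed to hold. One subtlety to be careful about: the translation from ``$\ell$ is eventually constant'' to ``$\ell\leq p_0^{|\Phi^+|}$'' uses that the dimension bound of the first part holds at the specific prime $p_0$ (which it does, being valid for all primes), so no circularity arises.

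If in addition the Lusztig conjecture holds, then $p_0$ may be taken to be any prime $\geq h$; the smallest guaranteed choice is some prime in $[h,2h)$, and crudely we may bound it by $2h$. Thus $\ell(V)\leq(2h)^{|\Phi^+|}$. Finally, using $|\Phi^+|\leq h^2/2$ (checked case-by-case from the tables in \cite{Bourb82}, as already used in the proof of Proposition~\ref{theoremFinite}) gives $(2h)^{|\Phi^+|}\leq(2h)^{h^2/2}$, and the very coarse estimate $(2h)^{h^2/2}=2^{h^2/2}h^{h^2/2}\leq h^{h^2/2}h^{h^2/2}=h^{h^2}$ (valid for $h\geq2$) gives the last inequality. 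I do not expect any serious obstacle here: the first part is an immediate consequence of Lemma~\ref{steinbergbig}, and the second part is a standard $p$-independence argument for LCF multiplicities followed by numerical estimates. The only point requiring care is making explicit that ``$p_0$'' in the hypothesis is a legitimate prime at which the first-part bound applies, so that evaluating at $p=p_0$ is justified.
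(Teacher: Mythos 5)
Your proposal is correct and follows essentially the same route as the paper: the first part by bounding length by dimension via Lemma \ref{steinbergbig}, and the second by the $p$-independence of decomposition numbers under the LCF (which the paper phrases via a fixed finite collection of affine Weyl group elements for the principal block plus translation functors, precisely to handle the fact that the restricted weights themselves vary with $p$), then evaluating at $p_0\leq 2h$ and using $|\Phi^+|\leq h^2/2$.
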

\begin{proof} The first part follows from Lemma \ref{steinbergbig}: certainly the
  dimension of $V$ must bound its length. 

For the remainder of the corollary,  Lusztig's character formula
implies a decomposition of the character of a Weyl module in the
principal block (i.e.~those with high weights of the form $w.0$ for
$w\in W_p$) into characters of simple modules corresponding to a fixed finite collection of elements of the affine Weyl group; moreover, this decomposition is
independent of $p$. Using translation functors, one gets a bound for
all $\ell(V)$ for $V$ of the stated type which is independent of $p\geq p_0$. The first part of
the corollary implies that $\ell(V)\leq p_0^{|\Phi^+|}$ works when
$p=p_0$, thus it works in general. Lusztig's conjecture states that
his character formula should hold for all $p\geq h$. Let $p_0$ be the
lowest prime $\geq h$. Then $p_0\leq 2h$ and so $\ell(V)\leq
p_0^{|\Phi^+|}\leq (2h)^{|\Phi^+|}$ as required. The remaining
inequalities are clear.\end{proof}

\begin{proof}[Proof of Theorem C]

Let $H$ be a simply connected, simple algebraic group with the same
root system as $G$. Then $V$ is obtained as the restriction of a
simple module $V$ for $H$. Since the Lusztig conjecture is assumed, we
have, by Corollary \ref{lcfcor}, that $\ell(W)\leq (2h)^{h^2/2}$ for any Weyl module $W$ with a restricted head. Now,
Corollary \ref{corbound} implies that $\dim H^1(H,V)\leq 
(2h)^{h^2/2}$. 

Proposition \ref{theoremFinite} implies that $\dim H^1(G_\sigma,
V)\leq \max
\left\{(2h)^{h^2/2},\left(h^2(3h-3)^3\right)^{\frac{h^2}{2}}\right\}$
for any strictly surjective endomorphism $\sigma$ of $G$. The
arguments of the proof of Theorem A go through as before, and the
first part of Theorem C follows.

For the second part, as in Theorem B, the cross-characteristic case follows from \cite{GT11}. The defining characteristic case is immediate from the above by noting that $h$ is linear with the
rank of $G$ and taking $\log$s of both sides of the inequality in the
Theorem C.
\end{proof}

\subsection*{Acknowledgements} This paper comprises work done at the
AIM conference `Bounding cohomology and growth rates'. We would like to thank the organisers and staff at the conference for their hospitality.
\bibliographystyle{amsalpha}
\bibliography{bib}

\end{document}